\setlist[itemize]{leftmargin=2em}
\setlist[enumerate]{leftmargin=2em}
\definecolor{darkblue}{rgb}{0.0,0,0.7} 
\definecolor{darkred}{rgb}{0.7,0,0} 
\definecolor{darkgreen}{rgb}{0, .6, 0} 
\newcommand{\defncolor}{\color{darkred}}
\newcommand{\defn}[1]{{\defncolor\emph{#1}}} 
\newtheorem{theorem}{Theorem}[section]
\newtheorem{proposition}[theorem]{Proposition}
\newtheorem{corollary}[theorem]{Corollary}
\newtheorem{lemma}[theorem]{Lemma}
\theoremstyle{definition}
\newtheorem{definition}[theorem]{Definition}
\newtheorem{example}[theorem]{Example}
\newtheorem{remark}[theorem]{Remark}
\newtheorem{problem}[theorem]{Problem}
\numberwithin{equation}{section}
\def\NN{{\mathbb N}}
\def\CC{{\mathbb C}}
\def\ZZ{{\mathbb Z}}
\def\QQ{{\mathbb Q}}
\newcommand{\FGCCHA}{\textsf{FGCCHA}\xspace}
\newcommand{\FGCCHAs}{\textsf{FGCCHA}s\xspace}
\newcommand{\vecdim}{\overrightarrow{\dim}}
\newcommand{\veccard}{\overrightarrow{\operatorname{card}}}
\newcommand{\OPG}{\operatorname{OPG}}
\newcommand{\NCSym}{\mathsf{NCSym}}
\newcommand{\mike}[1]{\todo[size=\tiny,color=green!30]{#1 \\ \hfill --- Mike}}
\title{When are Hopf algebras determined by integer sequences?}
\author{Nicolas Andrews, Lucas Gagnon, F\'{e}lix G\'{e}linas, Eric Schlums, Mike Zabrocki}
\begin{document}
\maketitle
\begin{abstract}
We study the category of graded Hopf algebras that are
free noncommutative, cocommutative,
graded and connected
from the perspective of the sequences of dimensions of the graded pieces.
We show that a Hopf algebra exists with a given sequence of graded dimensions
if and only if the ``INVERTi'' transformation of the sequence is nonnegative.
We give conditions on the sequences of graded dimensions for two Hopf algebras $H$ and $K$
in this category under which there exists a surjective homomorphism from $H$ to $K$.
We also give conditions such that an isomorphic copy of
$H$ occurs as a Hopf subalgebra of $K$.
\end{abstract}

\setcounter{tocdepth}{1}
\tableofcontents

\section{Introduction}

This paper begins with a question about the Hopf algebra of symmetric functions in noncommutative variables, $\NCSym$~\cite{AgEtAl12, BHRZ05, GebSag01, HNT08, RS06, W36}.  
This is a graded Hopf algebra with dimensions given by the Bell numbers, and many well-known bases of $\NCSym$ are naturally indexed by set partitions.  
The Catalan numbers enumerate several distinguished families of set partitions, including the nonnesting and noncrossing set partitions.  
We recognized that some bases of $\NCSym$ had product and coproduct formulas that were closed when restricted to the nonnesting or noncrossing subsets, meaning that these sets span (possibly distinct) Hopf subalgebras of $\NCSym$ with graded dimensions equal to the Catalan numbers.

\begin{problem}
\label{problem:motivating_calatan_question}
Classify all Hopf subalgebras of $\NCSym$ with graded dimensions equal to the Catalan numbers.
\end{problem}

%

We resolve Problem~\ref{problem:motivating_calatan_question} up to isomorphism by applying a recent result of Aliniaeifard and Thiem~\cite{AT22} and the Shirshov--Witt Theorem~\cite{S09, Witt56}.  
The former result applies to every member of the full subcategory of free, graded, connected, and cocommutative Hopf algebras, which we will abbreviate as \FGCCHA.  
As stated above, this includes $\NCSym$.  
In the context of \FGCCHAs,~\cite[Theorem~12]{AT22} shows that the isomorphism class of an
\FGCCHA $H$ can be fully determined by the \emph{dimension sequence}
\[
\vecdim(H) = \big( \dim(H_{1}), \dim(H_{2}), \dim(H_{3}), \ldots \big)
\qquad\text{where}\qquad
H = \bigoplus_{n \ge 0} H_{n}.
\]
We answer Problem~\ref{problem:motivating_calatan_question} by showing that all Hopf subalgebras of any \FGCCHA (including $\NCSym$) are \FGCCHAs, a fact that we deduce from the Shirshov--Witt Theorem; see Lemma~\ref{lem:ShiWitt}.

Beyond $\NCSym$, much work has gone into understanding specific examples including the \FGCCHAs of noncommutative symmetric functions (see Example~\ref{ex:NSym}), permutations (see Example~\ref{example:hopf_algebras_of_permutations} and \cite{PR02}), and trees~\cite{AS05cc, GL09}.  
At a more general level, properties of the category of \FGCCHAs (or its dual) have been considered in several references~\cite{AT22, Block85, FP24, LodayRonco10, MM65, Reutenauer-FreeLieAlgebras}, and key results appear in Loday's work on triples of operads which includes \FGCCHAs as a sub-category~\cite[Section 4.1]{L08}.  

The particular ingredients of Aliniaefard and Thiem's result comes from Loday's generalization of the  Cartier--Milnor--Moore and Poincar\'{e}--Birkhoff--Witt theorems~\cite[Theorem 4.1.3]{L08}.  These results state that the structure of an \FGCCHA $H$, is completely determined by the graded dimensions of the subspace of primitives in $H$ and the size of a free generating set of $H$, respectively.

Foissy \cite{F12} has studied the category of free and cofree Hopf algebras from a similar perspective. 
In~\cite[Proposition 23]{F12}, Foissy finds that free and cofree Hopf algebras are also determined by the sequences of
graded dimensions.
For a free commutative and co-commutative Hopf algebra, the Cartier--Milnor--Moore theorem
also can be used to characterize the algebra structure by sequences of graded dimensions.
The present work assembles a similar toolbox for the category of \FGCCHAs and its dual.

Aguiar and Lauve \cite{AL13} also provide
necessary, but not sufficient, conditions on the dimension sequence for the existence of a Hopf subalgebra
of an arbitrary graded and connected Hopf algebra.
Here we provide both necessary and sufficient
conditions under the assumption that we are working in an \FGCCHA.

We collect known results from various references (Theorem~\ref{thm:AT}, Proposition~\ref{prop:LieToFree}, Proposition~\ref{prop:indecomposable generators} Lemma~\ref{lemma:primitive generators}, and Proposition \ref{lem:ShiWitt}), which have not appeared in a single paper before.  This perspective allows us to determine whether maps between \FGCCHAs exist by checking properties of the sequences.  
For example, proving that two \FGCCHAs have equal dimension sequences amounts to showing that they are isomorphic, by Aliniaeifard and Thiem's result.

The sections of this paper are structured as follows.
Sections~\ref{sec:SequencePrelims} and~\ref{sec:HopfAlgebra} state the
relationship between the dimension sequence of an \FGCCHA, the number of free generators,
and the dimension sequence of the Lie algebra of primitives. Section~\ref{sec:SequencePrelims}
establishes preliminary notation about sequences, while Section~\ref{sec:HopfAlgebra}
focuses on the algebraic content.

In Section~\ref{sec:isoclassification} we interpret a result by
Aliniaeifard and Thiem as a characterization of \FGCCHAs by their
dimension sequences and Theorem~\ref{thm:a sequence}
states that a sequence of nonnegative integers is the 
dimension sequence of an \FGCCHA if and only if the INVERTi transform~\cite{OEIS}
of the sequence is nonnegative.
Theorem~\ref{thm:OPGmaps} describes an explicit construction of the
isomorphisms between \FGCCHAs and a canonical representative of each isomorphism class.

In the last two sections we use the association between \FGCCHAs
and sequences from Section~\ref{sec:isoclassification}
to describe the structure of the category of \FGCCHAs.
In Section~\ref{sec:surjection}, Theorem~\ref{thm:surjection} 
shows that there exists a surjective homomorphism from an \FGCCHA with
dimension sequence $\vec{h}$ to an \FGCCHA with dimension sequence 
$\vec{k}$ if and only if the INVERTi transform of $\vec{k}$ is dominated by 
the INVERTi transform of $\vec{h}$.
In the last section, Theorem~\ref{thm:subclassification2} states
that an \FGCCHA  with dimension sequence $\vec{h}$ occurs as a Hopf 
subalgebra of an \FGCCHA with dimension sequence $\vec{k}$ if and only if 
the inverse Euler transform of $\vec{k}$ dominates the inverse Euler transform of $\vec{h}$.  
This can be used to classify all Hopf subalgebras of \FGCCHAs.

\vspace{4ex}

We illustrate our results by describing how they can be applied to resolve Problem~\ref{problem:motivating_calatan_question}.  
As stated above, the dimension sequence of $\NCSym$ is the Bell numbers,
\[
\vecdim(\NCSym) = (1, 2, 5, 15, 52, 203, \ldots)
\qquad\text{see~\cite[\href{https://oeis.org/A000110}{A000110}]{OEIS}.}
\]
Every Hopf subalgebra of $\NCSym$ corresponds to a sequence whose inverse Euler transform is dominated by $(1, 1, 3, 9, 34, 135,\ldots)$~\cite[\href{https://oeis.org/A085686}{A085686}]{OEIS}, which is  the inverse Euler transform of the Bell numbers.  
In particular, the Catalan numbers $(1, 2, 5, 14, 42, 132, \ldots)$~\cite[\href{https://oeis.org/A000108}{A000108}]{OEIS} have inverse Euler transform $(1, 1, 3, 8, 25, 75,\ldots)$~\cite[\href{https://oeis.org/A022553}{A022553}]{OEIS}, which is dominated by~\cite[\href{https://oeis.org/A085686}{A085686}]{OEIS}.  
Therefore $\NCSym$ has a Hopf subalgebra with Catalan dimension sequence, which is unique up to isomorphism; in the following we identify such a Hopf subalgbera and then identify a well-known representative of its isomorphism class.  

Our example uses the power-sum basis $\{\mathbf{p}_{\pi} \;|\; \text{$\pi$ is a set partition of $[n]$, $n \ge 0$}\}$ of $\mathsf{NCSym}$; see~\cite{BHRZ05} for details on the Hopf structure on this basis.  
Direct calculation shows that the subset $\{\mathbf{p}_{\pi} \;|\; \text{$\pi$ is noncrossing}\}$ is closed under the product and coproduct, so they span a Hopf subalgebra whose dimension sequence is the Catalan numbers.  
There are many more examples including the one constructed in~\cite{AT20}.

In~\cite{NT05} Novelli and Thibon define a graded connected Hopf algebra $\mathbf{CQSym}$ which has a basis $\mathbf{P}^{\pi}$ indexed by noncrossing set partitions $\pi$.  
This Hopf algebra is manifestly cocommutative and~\cite[Proposition 5.2]{NT05} shows that it is free, so $\mathbf{CQSym}$ is an \FGCCHA.  
Moreover, the number of noncrossing set partitions is the Catalan numbers.

Using Aliniaefard and Thiem's result, $\mathbf{CQSym}$ is isomorphic to every Hopf subalgebra of $\NCSym$ with dimension sequence equal to the Catalan numbers, and in particular to $\CC\operatorname{-span}\{\mathbf{p}_{\pi} \;|\; \text{$\pi$ is noncrossing}\}$.  
It is important to note that this does not immediately produce an isomorphism, as the product and the coproduct on the bases $\mathbf{P}^{\pi}$ from~\cite{NT05} and $\mathbf{p}_{\pi}$ from~\cite{BHRZ05} are not the same.

\section*{Acknowledgement}
The authors are grateful to Nantel Bergeron and Aaron Lauve for providing feedback on an earlier draft of this article.
We are also grateful to the anonymous referees for valuable suggestions and improvements.

\section{Combinatorics of sequences of numbers}
\label{sec:SequencePrelims}

Let $\QQ^{\ZZ_+}$ (respectively $\ZZ^{\ZZ_+}$ and $\NN^{\ZZ_+}$) denote the space of infinite sequences
$(c_{n})_{n \ge 1} = (c_{1}, c_{2}, \ldots)$ of rational (respectively, integer and natural) numbers,
i.e.~$c_{n} \in \QQ$ for all $n \ge 1$.
Given two sequences $\vec{c} = (c_{1}, c_{2}, \ldots)$ and $\vec{d} = (d_{1}, d_{2}, \ldots)$, we write
\[
\vec{c} \le \vec{d} 
\qquad\text{if and only if}\qquad
\text{$c_{n} \le d_{n}$ for all $n \ge 1$}.
\]
Let $\vec{0} \in \QQ^{\ZZ_+}$ denote the zero sequence, so that $\vec{c} \ge \vec{0}$
if and only if $\vec{c}$ consists entirely of nonnegative entries.

Our main results make use of three interrelated sequences that we will denote by $\vec{h}$, $\vec{a}$, and $\vec{p}$.  
In this section we take a purely enumerative perspective to these sequences, assuming only that they satisfy the relation given in Equation~\eqref{eq:gf_relation} and may have values from a field containing $\QQ$.  

However, in later sections these sequences come from an \FGCCHA $H$
and have an interpretation showing that they are nonnegative integers.
For a graded vector space $V = \bigoplus_{n \ge 0} V_{n}$ with $V_{0} = \CC$, let
\[
\vecdim(V) = \left( \dim(V_{n}) \right)_{n \ge 1}.
\]
Similarly, for a graded set $X = \biguplus_{n \ge 1} X^{(n)}$, let
\[
\veccard(X) = \left( |X^{(n)}| \right)_{n \ge 0}~.
\]
This interpretation will provide useful motivation here:
\begin{itemize}
\item $\vec{h} = (h_{1}, h_{2}, \ldots)$ will be the dimension sequence of $H$,

\item $\vec{a} = (a_{1}, a_{2}, \ldots)$ will be the graded numbers of free generators of the algebra, and 

\item $\vec{p} = (p_{1}, p_{2}, \ldots)$ will be the dimension sequence of the Lie algebra of primitives $\mathcal{P}(H)$.

\end{itemize}

To define our sequences, we make use of the fact, recorded in Proposition~\ref{prop:sequences}
below, that any formal power series in $\QQ[\![t]\!]$ with constant term $1$
can be expressed in three equivalent ways,
\begin{equation}
\label{eq:gf_relation}
1 + \sum_{k \geq 1} h_k t^k = \frac{1}{1 - \sum_{m \geq 1} a_m t^m} = \prod_{d \geq 1} \frac{1}{(1-t^d)^{p_d}}~.
\end{equation}
which determines a triple of sequences: $(\vec{h}, \vec{a}, \vec{p})$ with $\vec{h} = (h_{1}, h_{2}, \ldots)$, $\vec{a} = (a_{1}, a_{2}, \ldots)$, and $\vec{p} = (p_{1}, p_{2}, \ldots)$.

\begin{example}
Take $f(t) = 1 + 2t + 3 t^{2} + \cdots \in \QQ[\![t]\!]$, so that $\vec{h} = (2, 3, 4, \ldots)$.
Then $f(t)$ is the power series,
\[
f(t) = \frac{1}{1 - 2 t + t^{2}} = \frac{1}{(1-t)^{2}}
\]
so the remaining sequences are $\vec{a} = (2, -1, 0, \ldots)$ and $\vec{p} = (2, 0, 0, \ldots)$.
\end{example}
A \defn{composition} of $n$ is a sequence $\beta = (\beta_1, \beta_2, \ldots, \beta_\ell)$ of positive integers such that $\beta_{1} + \beta_{2} + \cdots + \beta_{\ell} = n$.  
We refer to the integers $\beta_{i}$ as the parts of $\beta$ and write $\ell(\beta)$ for the length of $\beta$, which is the number of parts.  
We will use the notation $\beta \vDash n$ to indicate that $\beta$ is a composition of $n$.

We say that $\lambda \vDash n$ is a \defn{partition} of $n$ if $\lambda_1 \geq \lambda_2 \geq \cdots \geq \lambda_{\ell(\lambda)}$.  We will indicate that $\lambda$ is a partition
of $n$ with the notation $\lambda \vdash n$.  We will also use the notation $m_d(\lambda)$ to be the number of times
that $d$ appears as a part in $\lambda$.

For a composition $\alpha = (\alpha_{1}, \alpha_{2}, \ldots, \alpha_{\ell}) \vDash n$,
define for the sequence $\vec{c} = (c_{1}, c_{2}, \ldots) \in \QQ^{\ZZ_+}$,
\[
c_{\alpha} = c_{\alpha_{1}} c_{\alpha_{2}} \cdots c_{\alpha_{\ell}}.
\]
Note that $c_{\alpha} = c_{\beta}$ whenever $\beta \vDash n$ is a composition with the same parts as $\alpha$ in a possibly different order, i.e.~ $\beta_{i} = \alpha_{\sigma(i)}$ for some permutation $\sigma$ of the integers $\{1,2, \ldots, \ell(\alpha)\}$.

We will make extensive use of explicit formulas relating the sequences $\vec{h}$, $\vec{p}$, and $\vec{a}$.  The following formula can be derived directly from Equation~\eqref{eq:gf_relation} using standard techniques.  A form of the first part of (iii) below also appears in~\cite[Theorem 2.2]{KK95} as the ``generalized Witt's formula.''

\begin{proposition}
\label{prop:sequences}
Any one sequence $\vec{h}$, $\vec{a}$, or $\vec{p} \in \QQ^{\ZZ_+}$ belongs to a unique triple $(\vec{h}, \vec{a}, \vec{p})$ of sequences that satisfy Equation~\eqref{eq:gf_relation}, given by:
\begin{enumerate}[label = (\roman*), itemsep = 1em]
\item $\displaystyle h_{n}
= \sum_{\beta \vDash n} a_\beta
= \sum_{\lambda \vdash n} \prod_{d \geq 1} \binom{p_d + m_d(\lambda) -1}{m_d(\lambda)}$, 

\item $\displaystyle a_n
= \sum_{\beta \vDash n} (-1)^{\ell(\beta)-1} h_\beta
= \sum_{\lambda \vdash n} (-1)^{\ell(\lambda)-1} \prod_{d \geq 1} \binom{p_d}{m_d(\lambda)}$, and

\item $\displaystyle p_n
= \sum_{d|n} \sum_{\beta \vDash d} \frac{d\cdot \mu(n/d)}{n \cdot \ell(\beta)} a_\beta
= \sum_{d|n} \sum_{\beta \vDash d} \frac{d\cdot \mu(n/d) (-1)^{\ell(\beta)-1}}{n \cdot \ell(\beta)} h_\beta$.

\end{enumerate}
\end{proposition}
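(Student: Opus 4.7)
The plan is to extract all six identities directly from Equation~\eqref{eq:gf_relation} by expanding each of the three generating functions and comparing coefficients of $t^n$. Write $H(t) = \sum_{k \ge 1} h_k t^k$ and $A(t) = \sum_{m \ge 1} a_m t^m$, so \eqref{eq:gf_relation} reads $1 + H(t) = 1/(1 - A(t)) = \prod_{d \ge 1} (1 - t^d)^{-p_d}$. Uniqueness of the triple will follow automatically: each of the six formulas expresses one sequence entirely in terms of one of the others.

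For the first equalities in (i) and (ii), I would expand $1/(1 - A(t)) = \sum_{j \ge 0} A(t)^j$ as a geometric series. Multiplying out and grouping like powers of $t$ by total degree presents the coefficient of $t^n$ as a sum indexed by compositions, immediately giving $h_n = \sum_{\beta \vDash n} a_\beta$. The dual identity for $a_n$ comes from the same expansion applied to $A(t) = 1 - (1 + H(t))^{-1} = \sum_{j \ge 1} (-1)^{j-1} H(t)^j$, from which the composition formula with sign $(-1)^{\ell(\beta) - 1}$ drops out.

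For the second equalities, I would expand the product side of \eqref{eq:gf_relation}. Using the negative binomial series $(1 - t^d)^{-p_d} = \sum_{k \ge 0} \binom{p_d + k - 1}{k} t^{dk}$, a term of total degree $n$ in $\prod_d (1 - t^d)^{-p_d}$ corresponds to a choice of $m_d \ge 0$ for each $d$ with $\sum_d d \cdot m_d = n$, i.e.\ to a partition $\lambda \vdash n$ with $m_d(\lambda) = m_d$; collecting gives the partition formula in (i). Dually, expanding $1 - A(t) = \prod_d (1 - t^d)^{p_d}$ via the ordinary binomial theorem produces the partition formula in (ii), the sign $(-1)^{\ell(\lambda)}$ arising as $\prod_d (-1)^{m_d(\lambda)}$ and flipping to $(-1)^{\ell(\lambda) - 1}$ upon moving to $-(1 - A(t))$'s coefficient.

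For (iii), I would take the formal logarithm of \eqref{eq:gf_relation}. The expansions $\log(1 + H(t)) = \sum_{j \ge 1} (-1)^{j-1} H(t)^j/j$ and $-\log(1 - A(t)) = \sum_{j \ge 1} A(t)^j/j$ give the coefficient of $t^n$ as $\sum_{\beta \vDash n} (-1)^{\ell(\beta)-1} h_\beta / \ell(\beta)$ and $\sum_{\beta \vDash n} a_\beta / \ell(\beta)$ respectively; meanwhile $\log \prod_d (1 - t^d)^{-p_d} = \sum_d p_d \sum_{k \ge 1} t^{dk}/k$ has coefficient $\sum_{d \mid n} (d/n) p_d$. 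Multiplying through by $n$ puts the resulting equalities in the divisor-sum form $\sum_{d \mid n} d\, p_d = (\text{known})$, and Möbius inversion recovers $n p_n$ and hence both formulas in (iii). The main obstacle is just executing this Möbius inversion cleanly: one must be careful about which factors of $d$, $n$, and $\ell(\beta)$ are absorbed at each step, since the final sum ranges over pairs $(d, \beta)$ with $d \mid n$ and $\beta \vDash d$ rather than over compositions of $n$.
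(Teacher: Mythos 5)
Your proposal is correct and is essentially the paper's own argument: the paper gives no written proof, stating only that the formulas ``can be derived directly from Equation~\eqref{eq:gf_relation} using standard techniques,'' and your expansions (geometric series for the composition sums, negative binomial and binomial theorems for the partition sums, and formal logarithm followed by M\"obius inversion for (iii)) are exactly those standard techniques, carried out correctly. The sign bookkeeping via $\sum_{d} m_d(\lambda) = \ell(\lambda)$ and the divisor-sum inversion $n p_n = \sum_{d \mid n} \mu(n/d)\, d \sum_{\beta \vDash d} a_\beta/\ell(\beta)$ both check out.
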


\begin{remark} Although not necessarily obvious, the interested reader can verify using an induction argument that if $\vec{a} \in \ZZ^{\ZZ_+}$ or $\vec{h} \in \ZZ^{\ZZ_+}$, then $\vec{p} \in \ZZ^{\ZZ_+}$.
\end{remark}

Proposition \ref{prop:sequences} is biconditional in the sense that given any equality in the proposition, reversing the calculations shows that the corresponding generating function relations in Equation~\eqref{eq:gf_relation}
will hold.  (See e.g.~\cite{AL15} below Equation (33).)

\begin{definition}
\label{defn:sequencetransfer}
The \emph{sequence transfer maps} relate sequences $(\vec{a}, \vec{h}, \vec{p})$ satisfying Equation~\eqref{eq:gf_relation}:
\begin{align*}
\begin{array}{rcl}
\phi_{p, a} : \QQ^{\ZZ_+} & \to & \QQ^{\ZZ_+} \\
\vec{p} & \mapsto & \vec{a}
\end{array},
\qquad
\begin{array}{rcl}
\phi_{h, a} : \QQ^{\ZZ_+} & \to & \QQ^{\ZZ_+} \\
\vec{h} & \mapsto & \vec{a}
\end{array},
\qquad\text{and}\qquad 
\begin{array}{rcl}
\phi_{p, h} : \QQ^{\ZZ_+} & \to & \QQ^{\ZZ_+} \\
\vec{p} & \mapsto & \vec{h}
\end{array}
\end{align*}
as well as their inverses
\[
\phi_{a, p} = \phi_{p, a}^{-1},
\qquad\qquad
\phi_{a, h} = \phi_{h, a}^{-1},
\qquad\qquad\text{and}\qquad\qquad
\phi_{h, p} = \phi_{p, h}^{-1}.
\]
\end{definition}

Following the conventions of \cite{OEIS}, the map $\phi_{p,h}$ is known
as the Euler transform of the sequence
(and consequently $\phi_{h,p}$ is the inverse Euler transform).
Also following the conventions
of \cite{OEIS},
the map $\phi_{a,h}$ is known as the INVERT transform
and $\phi_{h,a}$ is the INVERTi transform.

\begin{example}
\label{ex:catalan}
Take $f(t) = 1 + t + 2 t^{2} + 5t^3 + 14t^4 + \cdots \in \QQ[[t]]$,
so that $h_n$ is equal to the $n^{th}$ Catalan number
$\frac{1}{n+1}\binom{2n}{n}$.  It is well known that $f(t)$ satisfies the functional equation
\[
f(t) = \frac{1}{1 - t - t^2 - 2t^3 - 5t^4 - 14t^5 - \cdots} = \frac{1}{1-t f(t)}
\]
so we have that $\phi_{h,a}(1,2,5,14,42,132,\ldots) = \vec{a} = (1,1,2,5,14,42, \ldots)$
(that is, $a_n$ is the $n-1^{st}$ Catalan number).
We can use Proposition~\ref{prop:sequences} to calculate the first few values of $\vec{p} = (1,1,3,8,25, \ldots)$.
It follows from \cite[\S 5.1.3]{NT05} that $\vec{p}$ is equal to sequence \cite[\href{https://oeis.org/A022553}{A022553}]{OEIS}.
\end{example}

If the sequence $\vec{a}$ consists entirely of nonnegative integers,
then $\vec{h} = \phi_{a, h}(\vec{a})$ and $\vec{p} = \phi_{a, p}(\vec{a})$ will also have combinatorial interpretation in terms of words.
Recall that a word in a set $X$ is a finite sequence $w = w_{1} w_{2} \ldots w_{\ell}$ of ``letters'' $w_{i} \in X$.  
If $X = \biguplus_{n \geq 1} X^{(n)}$ is a graded set, then define the \emph{degree} of a word to be
\[
\mathsf{deg}(w_1 w_2 \ldots w_{\ell}) = \sum_{i = 1}^{\ell} \mathsf{deg}(w_{i})
\qquad\text{where $\mathsf{deg}(x) = n$ for all $x \in X^{(n)}$}.
\]
For a fixed order on $X$, we order the words on $X$ lexicographically.  
The \emph{rotation} of a word $w = w_{1} w_{2} \ldots w_{\ell}$ is the word
\[
\mathsf{cyc}(w) =  w_{2} \ldots w_{\ell} w_{1}.
\]
This defines an operation of order $\ell$ on words of with $\ell$ letters.  
A word is \emph{Lyndon} if it is strictly smaller than each of $\mathsf{cyc}(w), \mathsf{cyc}^{2}(w), \ldots, \mathsf{cyc}^{\ell-1}(w)$.  
For instance, if $X = \{x < y\}$, then $xyxyy$ is a Lyndon word, but neither $xyxy$ nor $xyx$ are: $xyxy = \mathsf{cyc}^{2}(xyxy)$, while $xyx > xxy = \mathsf{cyc}^{2}(xyx)$.

If the sequence $\vec{a}$ is of nonnegative integers rather than
any values from a field, there is a combinatorial interpretation for
$\phi_{a, h}(\vec{a})$ and $\phi_{a, p}(\vec{a})$.
Similar results are well-known and appear in expositions such as~\cite[Chapter 5]{Reutenauer-FreeLieAlgebras}.  
The exact statement below appears in~\cite[p.~566]{AL15}, which the authors attribute in part to~\cite{KK95}.

\begin{proposition}
\label{prop:combinatorialinterpretation}
Let $\vec{a} \in \NN^{\ZZ_+}$ so that there exists a graded set $X = \biguplus_{n \geq 1} X^{(n)}$ with $\veccard(X) = \vec{a}$.
If $\vec{h} = \phi_{a, h}(\vec{a})$ and $\vec{p} = \phi_{a, p}(\vec{a})$, then for each $n \geq 1$, we have:
\begin{enumerate}[itemsep = 0.5em]
\item $h_n$ is equal to the number of words of degree $n$ in the alphabet $X$ for all $n \ge 1$, and 

\item $p_n$ is equal to the number of Lyndon words of degree $n$ in the alphabet $X$ for all $n \ge 1$.
\end{enumerate}
\end{proposition}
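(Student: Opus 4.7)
The plan is to derive both statements by massaging the generating function $1 + \sum_{k \geq 1} h_k t^k = \frac{1}{1-\sum_{m \geq 1} a_m t^m}$ from Equation~\eqref{eq:gf_relation} combinatorially. For part (i), I would expand the right-hand side as a geometric series
\[
\frac{1}{1 - \sum_{m \geq 1} a_m t^m} = \sum_{\ell \geq 0} \Bigl(\sum_{m \geq 1} a_m t^m\Bigr)^{\ell},
\]
and then expand each $\ell^{\text{th}}$ power to obtain $\sum_{\beta} a_\beta t^{|\beta|}$, where the sum is over compositions $\beta$ of length $\ell$. Since $a_m = |X^{(m)}|$, each monomial $a_{\beta_1} a_{\beta_2} \cdots a_{\beta_\ell}$ counts the number of words $w_1 w_2 \cdots w_\ell$ with $w_i \in X^{(\beta_i)}$. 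Collecting by total degree $|\beta| = n$ identifies $h_n$ with the number of words of degree $n$ in the alphabet $X$.

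For part (ii), the key input is the Chen--Fox--Lyndon factorization theorem: every word over $X$ factors uniquely as a weakly decreasing (in lexicographic order) concatenation of Lyndon words. I would cite this from the standard reference~\cite{Reutenauer-FreeLieAlgebras}. Letting $q_n$ denote the number of Lyndon words of degree $n$, unique factorization yields a bijection between arbitrary words of degree $n$ and multisets of Lyndon words whose degrees sum to $n$. Translating this into generating functions gives
\[
1 + \sum_{k \geq 1} h_k t^k = \prod_{w \text{ Lyndon}} \frac{1}{1 - t^{\mathsf{deg}(w)}} = \prod_{d \geq 1} \frac{1}{(1 - t^d)^{q_d}}.
\]

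Finally, I would compare this with the third expression in Equation~\eqref{eq:gf_relation}, which gives $\prod_{d \geq 1} (1-t^d)^{-p_d}$. Taking logarithms of both product formulas (or, equivalently, invoking the uniqueness asserted by Proposition~\ref{prop:sequences}), the identity $\sum_{d} p_d \log\frac{1}{1-t^d} = \sum_{d} q_d \log \frac{1}{1-t^d}$ forces $p_d = q_d$ for every $d \geq 1$. The main obstacle is simply the appeal to the Chen--Fox--Lyndon factorization, which is the only nontrivial combinatorial input; everything else is bookkeeping with Equation~\eqref{eq:gf_relation}. Since the result and its proof are classical, I would keep the exposition brief and direct the reader to~\cite{Reutenauer-FreeLieAlgebras, AL15, KK95} for fuller treatments.
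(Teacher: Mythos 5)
Your proof is correct: the geometric-series expansion of $\tfrac{1}{1-\sum_m a_m t^m}$ gives part (i), and the Chen--Fox--Lyndon unique factorization into weakly decreasing Lyndon words, combined with the uniqueness of the triple in Proposition~\ref{prop:sequences}, gives part (ii). The paper itself does not prove this proposition but simply cites it (to \cite{AL15}, \cite{KK95}, and \cite[Chapter 5]{Reutenauer-FreeLieAlgebras}), and your argument is exactly the standard one found in those references, so there is nothing to compare beyond noting that you have supplied the proof the paper omits.
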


\begin{example}\label{ex:balancedLyndon}
For specific examples of sequences,
$\vec{p}$ may have better-known interpretations
than the one given in  Proposition~\ref{prop:combinatorialinterpretation}.
Continuing with Example \ref{ex:catalan}, we have that if
$h_n = a_{n+1} = \frac{1}{n+1}\binom{2n}{n}$, Proposition
\ref{prop:combinatorialinterpretation} says that $p_n$ is equal to the number
of Lyndon words in an alphabet $X = \biguplus_{n\geq1} X^{(n)}$ with $a_n = |X^{(n)}|$.
However,~\cite[\href{https://oeis.org/A022553}{A022553}]{OEIS} states that $p_n$ is also equal to the number of
length $2n$ Lyndon words in the alphabet $\{0 < 1\}$ with an equal number of $0$s and $1$s.
\end{example}

\begin{corollary}
If $\vec{a} \in \NN^{\ZZ_+}$, then $\vec{h} \geq \vec{p} \geq \vec{a} \geq \vec{0}$.
\end{corollary}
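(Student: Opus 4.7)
The plan is to apply Proposition~\ref{prop:combinatorialinterpretation} directly: since $\vec{a} \in \NN^{\ZZ_+}$, choose a graded set $X = \biguplus_{n \ge 1} X^{(n)}$ with $\veccard(X) = \vec{a}$, so that $h_n$ counts the degree-$n$ words in the alphabet $X$ and $p_n$ counts the degree-$n$ Lyndon words in $X$. The inequalities then reduce to three elementary observations about words.

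First I would note that $\vec{a} \ge \vec{0}$ is immediate from the hypothesis $\vec{a} \in \NN^{\ZZ_+}$. Next, for $\vec{p} \ge \vec{a}$, I would observe that every single-letter word is trivially Lyndon (there are no nontrivial rotations to compare with), so every $x \in X^{(n)}$ contributes a Lyndon word of degree $n$. Therefore $p_n \ge |X^{(n)}| = a_n$. Finally, for $\vec{h} \ge \vec{p}$, every Lyndon word of degree $n$ is in particular a word of degree $n$, so $h_n \ge p_n$.

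Since none of these steps presents any real obstacle, the argument is essentially an unpacking of Proposition~\ref{prop:combinatorialinterpretation}. The only subtle point worth mentioning is that the existence of the graded set $X$ with $\veccard(X) = \vec{a}$ requires exactly that each $a_n$ be a nonnegative integer, which is why the hypothesis $\vec{a} \in \NN^{\ZZ_+}$ (rather than just $\vec{a} \ge \vec{0}$ with rational entries) is needed to invoke the combinatorial interpretation.
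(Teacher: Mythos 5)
Your proposal is correct and matches the paper's own proof essentially verbatim: both invoke Proposition~\ref{prop:combinatorialinterpretation} to realize $h_n$ and $p_n$ as counts of degree-$n$ words and Lyndon words on a graded set $X$ with $\veccard(X)=\vec{a}$, and then read off the inequalities from the inclusions $\{\text{words}\}\supseteq\{\text{Lyndon words}\}\supseteq X^{(n)}$. No further comment is needed.
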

\begin{proof}
Let $X = \biguplus_{n \ge 1} X^{(n)}$ be a graded set with $|X^{(n)}| = a_{n}$.  
Considered as a word with one letter, each element $x \in X$ is a Lyndon word, so by Proposition~\ref{prop:combinatorialinterpretation} the inequalities above correspond to the set inclusions
\[
\{\text{degree $n$ words in $X$}\} 
\supseteq \{\text{degree $n$ Lyndon words in $X$}\} 
\supseteq X^{(n)}. \qedhere
\]
\end{proof}

\section{Sequences and Tensor algebras}
\label{sec:HopfAlgebra}

We now give the algebraic context for the results of Section~\ref{sec:SequencePrelims}.  
As in the introduction, we use the abbreviation \FGCCHA to mean \textsf{F}ree \textsf{G}raded \textsf{C}onnected \textsf{C}ocommutative \textsf{H}opf \textsf{A}lgebra, which we now formally define.

We first recall the tensor algebra of a graded vector space $V = \bigoplus_{n \ge 1} V_{i}$:
\begin{equation}
\label{eq:tensoralg}
\mathsf{T}(V) = \bigoplus_{n \ge 0} \Big( \bigoplus_{k \ge 0} V_{k} \Big)^{\otimes n},
\end{equation}
with a graded multiplication given by $\otimes$, where elements of $V_{k}$ have degree $k$.  

\begin{definition}
\label{def:FGCCHA}
An \FGCCHA is a graded cocommutative Hopf algebra $H$ which is freely generated by some graded subset $X = \biguplus_{n \ge 1} X^{(n)}$ of $H$, so that \textit{as algebras}
\[
H \cong \mathsf{T}(\CC X) 
\qquad\text{where}\qquad
\CC X = \bigoplus_{k \ge 0} \CC\operatorname{-span}\{x \in X^{(k)}\}.
\]
Under this isomorphism, the coproduct of $H$ corresponds to some graded algebra homomorphism $\tilde{\Delta}: \mathsf{T}(\CC X) \to \mathsf{T}(\CC X) \otimes \mathsf{T}(\CC X)$.
\end{definition}

We will also consider Lie subalgebras of $\mathsf{T}(V)$, and more generally any \FGCCHA, under the commutator bracket $[x, y] = xy - yx$.

\begin{definition}[{see~\cite[\S 0.2]{Reutenauer-FreeLieAlgebras}}]
\label{defn:freelie}
The free Lie algebra $\mathfrak{L}(X)$ on a graded set $X = \biguplus_{n\ge 1} X^{(n)}$ is the smallest graded subspace of $\mathsf{T}(\CC X)$ that contains $X$ and is closed under the commutator bracket.  
This is a Lie algebra under the commutator bracket, but not a Hopf algebra.
\end{definition}

Given any \FGCCHA $H$, the primitive elements of $H$ are elements of the graded subspace
\[
\mathcal{P}(H) = \{x \in H \;|\; \Delta(x) = x \otimes 1 + 1 \otimes x\}.
\]
To be clear, the generating set $X$ may not be contained in $\mathcal{P}(H)$.  
The bracket operation $[x,y] = xy - yx$ makes $\mathcal{P}(H)$ into a Lie algebra.  

Given a graded Lie algebra $L$, the (graded) derived subalgebra of $L$ is 
\begin{equation}
\label{eq:derivedLiesubalgebra}
[L, L] = \bigoplus_{n \ge 1} [L, L]_{n}
\qquad\text{where}\qquad
[L, L]_{n} = \CC\operatorname{-span}\{\text{degree-$n$ commutators of $L$}\}.
\end{equation}

The main result of this section relates the dimension sequence of an \FGCCHA to its generating set and primitives.
This result follows from a standard analysis which appears in the literature without being stated
as a numbered result (see for example \cite{AL15} Section 4.2).

\begin{proposition}
\label{prop:SequencesAsHopfStructures}
Let $H$ be a \FGCCHA with generating set $X$.  The triple of sequences
\[
\left(\vecdim(H), \,
\veccard(X),\,
\vecdim\left(\mathcal{P}(H)\right) \right)
\]
satisfies Equation~\eqref{eq:gf_relation}.
\end{proposition}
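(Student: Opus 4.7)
The plan is to establish the two equalities of Equation~\eqref{eq:gf_relation} separately, each arising from a different structural description of the \FGCCHA $H$. The first equality reflects the free (tensor-algebra) structure, while the second reflects the cocommutative structure through the Cartier--Milnor--Moore and Poincar\'e--Birkhoff--Witt theorems.

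For the first equality, I would use the algebra isomorphism $H \cong \mathsf{T}(\CC X)$ from Definition~\ref{def:FGCCHA}. The tensor algebra decomposes by tensor length as $\mathsf{T}(\CC X) = \bigoplus_{\ell \ge 0} (\CC X)^{\otimes \ell}$, and the degree-$n$ graded piece further refines as
\[
H_n \;\cong\; \bigoplus_{\beta \vDash n} (\CC X)_{\beta_1} \otimes \cdots \otimes (\CC X)_{\beta_{\ell(\beta)}}.
\]
Taking dimensions yields $h_n = \sum_{\beta \vDash n} a_\beta$, which by Proposition~\ref{prop:sequences}(i) is equivalent to the first equality $1 + \sum_{k \ge 1} h_k t^k = \frac{1}{1 - \sum_{m \ge 1} a_m t^m}$ in~\eqref{eq:gf_relation}.

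For the second equality, I would invoke the Cartier--Milnor--Moore theorem. Since $H$ is a graded connected cocommutative Hopf algebra over $\CC$, CMM produces an isomorphism of graded Hopf algebras $H \cong U(\mathcal{P}(H))$, where $U$ denotes the universal enveloping algebra. The Poincar\'e--Birkhoff--Witt theorem then provides a vector space basis of $U(\mathcal{P}(H))$ consisting of weakly increasing ordered monomials (with respect to some fixed total order on a homogeneous basis of $\mathcal{P}(H)$), allowing arbitrary nonnegative multiplicities. Each degree-$d$ basis element of $\mathcal{P}(H)$ thus contributes a geometric factor $\frac{1}{1-t^d}$ to the Hilbert series, yielding
\[
1 + \sum_{n \ge 1} h_n t^n \;=\; \prod_{d \ge 1} \frac{1}{(1-t^d)^{p_d}},
\]
which is the second equality in~\eqref{eq:gf_relation}.

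The main obstacle is conceptual rather than computational: one must correctly invoke CMM and PBW in the graded connected setting. Every required hypothesis (graded, connected, cocommutative, characteristic zero) is built into the definition of an \FGCCHA, and freeness is exactly what enables the tensor-algebra description used for the first equality. Once both generating function identities are in place, Proposition~\ref{prop:sequences} confirms that $(\vecdim(H), \veccard(X), \vecdim(\mathcal{P}(H)))$ forms a valid triple satisfying~\eqref{eq:gf_relation}, completing the argument.
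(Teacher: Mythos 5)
Your proposal is correct and follows essentially the same route as the paper: the first identity comes from counting degree-$n$ monomials in the tensor algebra $\mathsf{T}(\CC X)$ (the paper phrases this as counting words and cites Proposition~\ref{prop:combinatorialinterpretation}, while you expand by compositions and cite Proposition~\ref{prop:sequences}(i), which is the same computation), and the second comes from Cartier--Milnor--Moore plus Poincar\'e--Birkhoff--Witt, counting multisets of a homogeneous basis of $\mathcal{P}(H)$. No substantive differences.
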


For the sake of completeness we prove Proposition~\ref{prop:SequencesAsHopfStructures} at the end of the section.  
We will use the Cartier--Milnor--Moore and Poincare--Birkhoff--Witt theorems, stated below.  
First, however,  we give an example to illustrate the result.  

\begin{example}[Noncommutative symmetric functions]
\label{ex:NSym}
Following~\cite{GKLLRT}, let $\mathsf{NSym}$ be the cocommutative Hopf algebra freely generated by $X = \{ \mathbf{h}_1, \mathbf{h}_2, \mathbf{h}_3, \ldots \}$, with grading given by $\mathsf{deg}(\mathbf{h}_{i}) = i$ for all $i \ge 1$ so that $\veccard(X) = (1, 1, 1, \ldots)$.  
The bases of the degree $n$ homogeneous components of $\mathsf{NSym}$ are indexed by compositions
of $n$ and hence $\vecdim(\mathsf{NSym}) = (1,2,4,8,\ldots)$.
The dimension sequence of the space of primitives is 
$\vecdim(\mathcal{P}(\mathsf{NSym})) = (1, 1, 2, 3, 6, 9, 18, 30, 56, \ldots)$
whose $n^{th}$ term is equal to the number
of compositions of $n$ that are Lyndon \cite{H07}.
\end{example}


Given a graded Lie algebra $L = \bigoplus_{n \ge 1} L^{(n)}$, let the \emph{universal enveloping algebra} of $L$ be
\[
\mathcal{U}(L) = \mathsf{T}(L) \big/ \big\langle x\otimes y - y \otimes x - [x, y] \;|\; \text{$x, y \in L$} \big\rangle.
\]
This is a cocommutative Hopf algebra where the coproduct on $\mathcal{U}(L)$ is defined so that every element of $L$ is primitive.  By \cite[Theorem 1.4]{Reutenauer-FreeLieAlgebras} we have $\mathcal{P}(\mathcal{U}(L))=L$.  

The relationship between $\mathcal{U}(L)$ and $\mathcal{P}$ is further determined by the Cartier--Milnor--Moore Theorem and the Poincar\'{e}--Birkhoff--Witt Theorem.  We state each as they appear in~\cite[Theorem 4.1.3]{L08}, but we refer the reader to~\cite[Theorem 5.18]{MM65} and~\cite{Cartier57} (see also~\cite[Section 3.8]{Cartier2007}) for original statements.

Given any ordered homogeneous basis $\{z_i\}$ of $L$, the Poincar\'{e}--Birkhoff--Witt Theorem
states that the enveloping algebra $\mathcal{U}(L)$ has a homogeneous basis
\[
\{ z_{i_1}z_{i_2}\cdots z_{i_k} \;|\; i_1 \leq i_2 \leq \cdots \leq i_k\}.
\]
This statement carries important information about the dimension sequence of $\mathcal{U}(L)$, namely that 
\[
\vecdim(\mathcal{U}(L))_{n}= \#\{\text{multisubsets $S$ of $\{z_{i}\}$} \;|\; \sum_{z \in S} m_{z}(S) \mathsf{deg}(z) = n\},
\]
where $m_{z}(S)$ denotes the multiplicity of $z$ in $S$.

For any \FGCCHA $H$, the space $\mathcal{P}(H)$ is a Lie algebra under the bracket $[x, y] = xy - yx$.  
The Cartier--Milnor--Moore theorem states there is a Hopf algebra isomorphism
\[
\begin{array}{rcl}
\mathcal{U}(\mathcal{P}(H)) &\to& H \\
\mathcal{P}(H) \ni x & \mapsto&  x.
\end{array}
\]

\begin{proof}[Proof of Proposition~\ref{prop:SequencesAsHopfStructures}]
Let $\vec{h} = \vecdim(H)$, $\vec{a} = \veccard(X)$, and $\vec{p} = \vecdim(\mathcal{P}(H))$.  We will show that $\vec{h} = \phi_{a, h}(\vec{a})$ and $\vec{p} = \phi_{h, p}(\vec{h})$, from which the claim follows.  

For $n \ge 1$, $h_{n}$ is the number of degree $n$ monomials in $X$, which is also the number of degree $n$ words in $X$.  
Applying Proposition~\ref{prop:combinatorialinterpretation}, $\vec{h} = \phi_{a, h}(\vec{a})$.

On the other hand, the Milnor--Moore theorem states that $H$ is
isomorphic to the universal enveloping algebra of $\mathcal{P}(H)$.
By the Poincare--Birkhoff--Witt theorem, for any fixed homogeneous basis
$Y = \biguplus_{n \ge 0} Y^{(n)}$ of $\mathcal{P}(H)$, $h_{n}$ counts
the multisets of $Y$ whose elements have degree summing (with repetition)
to $n$.  Since $|Y^{(n)}| = p_{n}$ for all $n \ge 1$, the number of these
multisets is exactly
\[
\sum_{\lambda \vdash n} \prod_{d \geq 1} \binom{p_d + m_d(\lambda) -1}{m_d(\lambda)},
\]
so by Proposition~\ref{prop:sequences} we have $\vec{p} = \phi_{h, p}(\vec{h})$.
\end{proof}

\section{Characterization of \FGCCHAs by dimension sequence}
\label{sec:isoclassification}

This section gives a classification of isomorphism types of \FGCCHAs via the associated integer sequences.  
This begins with Aliniaeifard and Thiem's result.

\begin{theorem}[{\cite[Theorem 12]{AT22}}]
\label{thm:AT}
Let $H$ and $K$ be \FGCCHAs.  Then $H \cong K$ if and only if $\vecdim(H) = \vecdim(K)$.
\end{theorem}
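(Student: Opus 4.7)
The plan is to prove the biconditional in two directions. The forward direction is immediate: any Hopf algebra isomorphism $H \to K$ is in particular a graded vector space isomorphism, hence preserves graded dimensions.

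For the reverse direction, assume $\vecdim(H) = \vecdim(K)$. First I would invoke Proposition~\ref{prop:SequencesAsHopfStructures}, which guarantees that the triples $(\vecdim(H), \veccard(X_H), \vecdim(\mathcal{P}(H)))$ and $(\vecdim(K), \veccard(X_K), \vecdim(\mathcal{P}(K)))$ each satisfy Equation~\eqref{eq:gf_relation}, where $X_H$ and $X_K$ are the free generating sets of $H$ and $K$ as associative algebras. By the uniqueness clause of Proposition~\ref{prop:sequences}, the hypothesis $\vecdim(H) = \vecdim(K)$ propagates to $\veccard(X_H) = \veccard(X_K)$ and $\vecdim(\mathcal{P}(H)) = \vecdim(\mathcal{P}(K))$.

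Next I would show that $\mathcal{P}(H)$ is a \emph{free} graded Lie algebra (and likewise for $\mathcal{P}(K)$). By the Cartier--Milnor--Moore theorem, $H \cong \mathcal{U}(\mathcal{P}(H))$ as Hopf algebras; since $H$ is a free associative algebra, so is $\mathcal{U}(\mathcal{P}(H))$. Invoking the Shirshov--Witt theorem in the form ``if $\mathcal{U}(L)$ is a free associative algebra, then $L$ is a free Lie algebra'' yields that $\mathcal{P}(H)$ is free, with graded rank of free generators extracted from $\vecdim(\mathcal{P}(H))$ by inverting Witt's formula (this rank coincides with $\veccard(X_H)$). The same argument applies to $K$. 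Since free Lie algebras are classified up to graded isomorphism by the graded cardinalities of their free generating sets, any graded bijection between free generating sets of $\mathcal{P}(H)$ and $\mathcal{P}(K)$ extends uniquely by the universal property of $\mathfrak{L}$ to a graded Lie algebra isomorphism $\varphi : \mathcal{P}(H) \to \mathcal{P}(K)$.

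Finally I would apply the universal enveloping algebra functor to $\varphi$, producing a graded Hopf algebra isomorphism $\mathcal{U}(\varphi) : \mathcal{U}(\mathcal{P}(H)) \to \mathcal{U}(\mathcal{P}(K))$, and compose with the Cartier--Milnor--Moore isomorphisms $H \cong \mathcal{U}(\mathcal{P}(H))$ and $\mathcal{U}(\mathcal{P}(K)) \cong K$ to obtain the desired Hopf algebra isomorphism $H \to K$. The main obstacle is the freeness step: although $H \cong \mathsf{T}(\CC X_H)$ as associative algebras, the Hopf coproduct of $H$ need not make the generators $X_H$ primitive, so one cannot directly identify $\mathcal{P}(H)$ with $\mathfrak{L}(\CC X_H)$. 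Establishing freeness of $\mathcal{P}(H)$ under an arbitrary cocommutative coproduct on a free associative algebra is the nontrivial input, provided by Shirshov--Witt (equivalently, by Loday's good-triple-of-operads equivalence for $(\mathrm{Com},\mathrm{As},\mathrm{Lie})$); everything else is formal manipulation of the sequence transfer maps from Section~\ref{sec:SequencePrelims} together with standard functorial properties of $\mathcal{U}$.
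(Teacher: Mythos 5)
A preliminary remark on the comparison: the paper does not actually prove Theorem~\ref{thm:AT} --- it quotes it from~\cite{AT22} and points to the Cartier--Milnor--Moore and Poincar\'e--Birkhoff--Witt package of~\cite[Section 4.1]{L08} as the source of the ingredients. Your outline follows that intended route: the forward direction is trivial; for the converse you use Proposition~\ref{prop:SequencesAsHopfStructures} and the uniqueness clause of Proposition~\ref{prop:sequences} to propagate $\vecdim(H)=\vecdim(K)$ to equality of the $\vec{a}$ and $\vec{p}$ sequences, then aim to show $\mathcal{P}(H)$ and $\mathcal{P}(K)$ are free Lie algebras of equal graded rank and apply $\mathcal{U}(-)$ to a Lie isomorphism. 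You also correctly isolate the one nontrivial input: freeness of $\mathcal{P}(H)$ when the free algebra generators need not be primitive.

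The gap is in how you discharge that input. The Shirshov--Witt theorem (Lemma~\ref{lem:ShiWitt}) says that Lie \emph{subalgebras of free Lie algebras} are free; it is not the statement ``if $\mathcal{U}(L)$ is free associative then $L$ is a free Lie algebra,'' and it does not apply here: the only free Lie algebra available a priori is $\mathfrak{L}(\CC X_H)\subseteq\mathsf{T}(\CC X_H)$, and, as you yourself observe, $\mathcal{P}(H)$ need not lie inside it when the coproduct does not make $X_H$ primitive. What $\mathcal{P}(H)$ \emph{is} a Lie subalgebra of is the full commutator Lie algebra of $\mathsf{T}(\CC X_H)$, which is not free, so Shirshov--Witt yields nothing. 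The correct tool, already in the paper's toolbox, is Lemma~\ref{lemma:primitive generators} (\cite[Lemma 22]{PR04}, generalized in~\cite{FP24}): applying the Eulerian idempotent to a free generating set $X$ produces a \emph{primitive} free generating set $\mathbf{e}(X)$ of the same graded cardinality $\phi_{h,a}(\vecdim(H))$. From there $\mathcal{P}(H)=\mathfrak{L}(\CC\,\mathbf{e}(X))$ is free by Friedrichs' criterion (cf.~\cite[Theorem 1.4]{Reutenauer-FreeLieAlgebras}), and in fact you can then bypass Cartier--Milnor--Moore entirely: send the primitive generators of $H$ to those of $K$ degree by degree; the resulting algebra isomorphism is automatically a coalgebra map because both coproducts are algebra morphisms agreeing on primitive generators. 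Your parenthetical appeal to Loday's rigidity theorem for the triple $(\mathrm{Com},\mathrm{As},\mathrm{Lie})$ would also close the gap, but it is not ``equivalent'' to Shirshov--Witt; the paper reserves Shirshov--Witt for the classification of Hopf subalgebras in Section~\ref{sec:subalgebras}, where one genuinely takes Lie subalgebras of an already-free $\mathcal{P}(H)$.
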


The result is deduced from standard properties found in~\cite[Section 4.1]{L08}, and similar results appear elsewhere in the literature, see e.g.~\cite[Prop.~1.4]{AS05cc}.  We demonstrate the utility of this result in Example~\ref{example:hopf_algebras_of_permutations}.


We consider here the question of determining which sequences $\vec{h} \in \NN^{\ZZ_+}$
appear as the dimension sequence of an \FGCCHA.  In fact,
we go further by identifying an explicit representative of each isomorphism class of \FGCCHA.  
Recall the free graded Lie algebra from Definition~\ref{defn:freelie} and for each sequence $\vec{a} \in \NN^{\ZZ_{+}}$, let
\begin{equation}
\label{eq:La}
\mathfrak{L}(\vec{a}) = \mathfrak{L}(X_{\vec{a}}) \qquad\text{where}\qquad X_{\vec{a}} = \biguplus_{n \ge 1} \{x^{(n)}_{i} \;|\; 1 \le i \le a_{n}\}.
\end{equation}
Further recall the map $\phi_{h, a}$ from
Definition~\ref{defn:sequencetransfer} and the universal enveloping
algebra map $\mathcal{U}(-)$ defined in Section~\ref{sec:HopfAlgebra}.

\begin{theorem}
\label{thm:a sequence}
There is a bijection
\[
\begin{array}{rcl}
\Phi: 
\left\{ \text{sequences }\vec{a} \in \NN^{\ZZ_+} \right\} & \to & \left\{ \begin{array}{c} \text{Isomorphism classes} \\ \text{of \FGCCHA} \end{array} \right\} \\[2ex]
\vec{a} & \mapsto & [\mathcal{U}(\mathfrak{L}(\vec{a})))]\\
\phi_{h,a}(\vecdim(H)) & \mapsfrom & [H]
\end{array}.
\]
where $[-]$ denotes an isomorphism class.  
In particular, $\vec{h} \in \NN^{\ZZ_+}$ is the dimension sequence of an \FGCCHA $H$ if and only if $\phi_{h, a}(\vec{h}) \in \NN^{\ZZ_+}$.  
Similarly, $\vec{p} \in \NN^{\ZZ_{+}}$ is the dimension sequence for $\mathcal{P}(H)$ of an \FGCCHA $H$ if and only if $\phi_{p, a}(\vec{p}) \in \NN^{\ZZ_{+}}$.
\end{theorem}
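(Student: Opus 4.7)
My plan is to establish the bijection by verifying that both maps are well-defined and mutually inverse, and then to read off the two ``in particular'' statements as immediate consequences. The essential inputs are Proposition~\ref{prop:SequencesAsHopfStructures}, which encodes the compatibility of $\vecdim(H)$, $\veccard(X)$, and $\vecdim(\mathcal{P}(H))$ via Equation~\eqref{eq:gf_relation}, and Theorem~\ref{thm:AT}, which reduces isomorphism of \FGCCHAs to equality of dimension sequences.

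First I would check well-definedness of $\vec{a} \mapsto \mathcal{U}(\mathfrak{L}(\vec{a}))$. Gradedness is inherited from the graded set $X_{\vec{a}}$; connectedness holds because $X_{\vec{a}}$ has no degree-zero elements; cocommutativity is automatic because every element of a Lie algebra is primitive in its universal enveloping algebra. The only nontrivial point is \emph{freeness} as an algebra, which follows from the standard identification $\mathcal{U}(\mathfrak{L}(X)) \cong \mathsf{T}(\CC X)$ of the enveloping algebra of a free Lie algebra with the tensor algebra (see e.g.~\cite[Thm.~0.5]{Reutenauer-FreeLieAlgebras}). Once this is in hand, $X_{\vec{a}}$ is a free generating set with $\veccard(X_{\vec{a}}) = \vec{a}$, so Proposition~\ref{prop:SequencesAsHopfStructures} gives $\vecdim(\mathcal{U}(\mathfrak{L}(\vec{a}))) = \phi_{a,h}(\vec{a})$, which already shows that the forward map composed with the inverse map is the identity on $\NN^{\ZZ_+}$.

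For the reverse direction, given any \FGCCHA $H$ with a free generating set $X$, Proposition~\ref{prop:SequencesAsHopfStructures} tells us $\phi_{h,a}(\vecdim(H)) = \veccard(X)$, which lies in $\NN^{\ZZ_+}$. Composing, $\mathcal{U}(\mathfrak{L}(\phi_{h,a}(\vecdim(H))))$ is an \FGCCHA whose generating set has the same cardinality sequence as $X$, hence by Proposition~\ref{prop:SequencesAsHopfStructures} has the same dimension sequence as $H$; Theorem~\ref{thm:AT} then upgrades this to a Hopf algebra isomorphism, establishing that the reverse composition is the identity on isomorphism classes.

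Finally, the ``in particular'' statements come out for free. The characterization of \FGCCHA dimension sequences is a direct translation of the bijection: the only-if direction is the reverse well-definedness above, and the if direction produces an \FGCCHA realizing $\vec{h}$ as $\mathcal{U}(\mathfrak{L}(\phi_{h,a}(\vec{h})))$. For the primitive version, I would pass $\vec{p}$ through $\phi_{p,a}$ to reduce to the same bijection, using that $\mathcal{P}(\mathcal{U}(\mathfrak{L}(\vec{a}))) = \mathfrak{L}(\vec{a})$ by Cartier--Milnor--Moore and that $\vecdim(\mathfrak{L}(\vec{a})) = \phi_{a,p}(\vec{a})$ by Proposition~\ref{prop:SequencesAsHopfStructures}. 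The step I view as carrying the real content is the identification $\mathcal{U}(\mathfrak{L}(X)) \cong \mathsf{T}(\CC X)$, which underwrites both the freeness of the explicit representative and the fact that no nontrivial construction is hidden; once that is quoted, the rest is a careful assembly of the two earlier results and I do not expect a genuine obstacle.
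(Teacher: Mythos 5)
Your proof is correct and follows essentially the same route as the paper: well-definedness of the forward map via the identification $\mathcal{U}(\mathfrak{L}(X)) \cong \mathsf{T}(\CC X)$ (the paper's Proposition~\ref{prop:LieToFree}), the dimension computation via Proposition~\ref{prop:SequencesAsHopfStructures}, and the upgrade from equal dimension sequences to isomorphism via Theorem~\ref{thm:AT}. The only cosmetic difference is that you verify the two maps are mutually inverse where the paper checks injectivity and surjectivity separately; the ingredients and the logic are the same.
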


We prove the theorem in Section~\ref{sec:classificationproof} following additional examples and results. 

\begin{example}[A cocommutative Hopf algebra on permutations]
\label{example:hopf_algebras_of_permutations}
    Let $\vec{h} = (n!)_{n \geq 1}$, so that $\vec{a} = \phi_{h,a}(\vec{h}) = (1,1,3,13,71,\ldots)$~\cite[\href{https://oeis.org/A003319}{A003319}]{OEIS}.  
    Then $\vec{a}$ counts graded cardinality of the set $\dot{\mathfrak{S}}$ of \emph{connected} permutations, i.e.~those which are not the shifted concatenation of two others~\cite{AS05}.  
    (In the literature these are sometimes called ``indecomposable'' or ``irreducible'' permutations).
    By Theorem~\ref{thm:a sequence} there exists an \textsf{FGCCHA}
    $\mathcal{U}(\mathfrak{L}(\dot{\mathfrak{S}}))$ of $n$th dimension sequence $n!$.  
    
    There are numerous \FGCCHAs with dimension sequence $n!$ in the literature that have superficially different presentations.  
    This includes the Hopf algebras of heap ordered trees and permutation in~\cite{GL09}, the dual of the associated graded of the Malvenuto--Reutenauer Hopf algebra of permutations~\cite{AS05cc}, the Hopf algebra of permutations $\mathbf{\mathfrak{S}Sym}$ defined in~\cite[\S 3]{HNT08}, and the Hopf algebra of permutations ($\mathbb{K}\mathfrak{S}$) from~\cite[\S 5]{Li15}.
    The papers~\cite{AS05cc, GL09, HNT08, Li15} construct explicit isomorphisms between these Hopf algebras, and indeed by Theorem~\ref{thm:AT}, they are all isomorphic to $\mathcal{U}(\mathfrak{L}(\dot{\mathfrak{S}}))$.
    \end{example}

\begin{example}
Many combinatorially interesting sequences correspond to \FGCCHAs, but it is not obvious when this is the case; we illustrate this by considering two closely-related sequences.
\begin{enumerate}
\item $(1,1,2,3,5,8,13,\ldots)$, the Fibonacci sequence

\item $(2,1,3,4,7,11,18,\ldots)$, the Lucas sequence
\end{enumerate}
Let us consider the Fibonacci sequence $\vec{f} = (1,1,2,3,5,8,13,\ldots)$
as if it could be the dimension sequence of some Hopf algebra.
We compute first that $\phi_{h,a}(\vec{f}) = (1,0,1,0,1,0,\ldots)$
and by Theorem \ref{thm:a sequence} conclude
that there is a Hopf algebra $H$ with a basis indexed by
compositions with only odd parts and with one generator at each odd degree.
This Hopf algebra $H$ has dimension sequence equal to $\vecdim(H) = \vec{f}$.
This Hopf algebra is sometimes referred to as the peak algebra~\cite[\S 2]{Bergeron_2002}.

Next consider the Lucas sequence $\vec{\ell} = (2,1,3,4,7,11,18,\ldots)$.
This sequence cannot be the dimension sequence of an \FGCCHA because of the descent in the first position.
However, we can instead consider $\vec{\ell}' = (1,1,3,4,7,11,18,\ldots)$
as if it were the dimension sequence of an \FGCCHA.
We compute $\phi_{h,a}(\vec{\ell}')$ and determine
that the sequence contains negative entries and begins $(1, 0, 2, -1, 2, -3, 3,\ldots)$ and thus we conclude
that there does not exist a Hopf algebra with dimension sequence equal to $\vec{\ell}'$.
\end{example}

The rest of the section concerns the technical question of explicitly realizing the Aliniaeifard--Thiem isomorphism in every possible way; this has further applications in later sections.  
We now know that every \FGCCHA is isomorphic to $\mathcal{U}(\mathfrak{L}(\vec{a}))$ for some $\vec{a} \in \NN^{\ZZ_{+}}$, so it is natural to ask how many isomorphisms there are between these two objects.  
The following definition is motivated by the fact, proved in Theorem~\ref{thm:OPGmaps} below, that the isomorphisms between $\mathcal{U}(\mathfrak{L}(\vec{a}))$ and $H$ are in bijection with certain sequences of generators.

\begin{definition}
\label{def:OPG}
For an \FGCCHA $H$, an \emph{ordered primitive generating set} is a sequence of tuples
\[
\vec{A} = \Big(\vec{A}^{(n)} = (\alpha_{1}^{(n)}, \alpha_{2}^{(n)}, \ldots, \alpha_{a_{n}}^{(n)})\Big)_{n = 1}^{\infty}
\qquad\text{with}\qquad
\begin{array}{l}
\text{(1) each $\alpha_{i}^{(n)} \in \mathcal{P}(H)_{n}$ , and} \\
\text{(2) $H$ is freely generated by $\vec{A}$.}
\end{array}
\]
Let
\[
\OPG(H) = \left\{ \text{ordered primitive generating sets of $H$} \right\}.
\]
\end{definition}

The following theorem uses the set $\OPG(H)$ to quantify the number of isomorphisms  $\mathcal{U}(\mathfrak{L}(\vec{a})) \cong H$.

\begin{theorem}
\label{thm:OPGmaps}
Let $H$ be an \FGCCHA with $\vec{h} = \vecdim(H)$.  Then writing $\vec{a} = \phi_{h, a}(\vec{h})$ and $\vec{p} = \phi_{h, p}(\vec{h})$, we have bijections
\[
\begin{array}{rcl}
\Gamma: \{\text{graded Hopf algebra isomorphisms $\mathcal{U}(\mathfrak{L}(\vec{a})) \to H$}\} & \to & \OPG(H) \\
\phi & \mapsto & \phi(X_{\vec{a}}) \\
\end{array}
\]
and 
\[
\begin{array}{rcl}
\Xi: \OPG(H) &\to& \left\{ \begin{array}{c}
\text{Sequences $\big(M^{(n)} \in \mathrm{Mat}_{a_{n} \times p_{n}}(\CC)\big)_{n=1}^{\infty}$ } \\
\text{with $\det\big( (M^{(n)}_{i, j})_{1 \le i, j \le a_{n}} \big) \neq 0$ for $n \ge 1$}
\end{array}\right\}
\end{array}.
\]
\end{theorem}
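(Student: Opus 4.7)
The proof naturally splits into two bijections. For $\Gamma$, the plan is to exploit the universal property of the tensor algebra, using the identification $\mathcal{U}(\mathfrak{L}(\vec{a})) \cong \mathsf{T}(\CC X_{\vec{a}})$ already invoked in the proof of Proposition~\ref{prop:SequencesAsHopfStructures}. I would first verify that $\Gamma$ is well-defined: given a Hopf algebra isomorphism $\phi : \mathcal{U}(\mathfrak{L}(\vec{a})) \to H$, each image $\phi(x_i^{(n)})$ lies in $\mathcal{P}(H)_n$ because $x_i^{(n)}$ is primitive and $\phi$ is a coalgebra morphism, and the full collection freely generates $H$ as an algebra because $\phi$ is an algebra isomorphism and $X_{\vec{a}}$ freely generates $\mathsf{T}(\CC X_{\vec{a}})$. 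For the inverse, given $\vec{A} \in \OPG(H)$ I would extend the assignment $x_i^{(n)} \mapsto \alpha_i^{(n)}$ to an algebra homomorphism $\phi_{\vec{A}}$ by the universal property; this is an algebra isomorphism because $\vec{A}$ freely generates $H$, and it is a coalgebra morphism because both composites $\Delta_H \circ \phi_{\vec{A}}$ and $(\phi_{\vec{A}} \otimes \phi_{\vec{A}}) \circ \Delta$ are algebra homomorphisms that agree on $X_{\vec{a}}$ (since both generators and their images are primitive). The two constructions are clearly mutually inverse.

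For $\Xi$, I would fix for each $n \ge 1$ an ordered basis $(y_1^{(n)}, \ldots, y_{p_n}^{(n)})$ of $\mathcal{P}(H)_n$ whose first $a_n$ elements descend to a basis of the quotient $\mathcal{P}(H)_n / [\mathcal{P}(H), \mathcal{P}(H)]_n$. This is possible because $\mathcal{P}(H)$ is a free graded Lie algebra isomorphic to $\mathfrak{L}(\vec{a})$ (via Cartier--Milnor--Moore applied to Theorem~\ref{thm:a sequence}), so its graded abelianization has component of dimension $a_n$ in degree $n$. Sending $\vec{A} \in \OPG(H)$ to the matrix sequence with $M^{(n)}_{i,j}$ the coefficient of $y_j^{(n)}$ in $\alpha_i^{(n)}$ defines $\Xi$ injectively. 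The core step is the equivalence: $\vec{A}$ is an OPG if and only if the top $a_n \times a_n$ block of each $M^{(n)}$ is nonsingular. By Cartier--Milnor--Moore and Proposition~\ref{prop:indecomposable generators} (or Lemma~\ref{lemma:primitive generators}), the tuple $\{\alpha_i^{(n)}\}$ freely generates $H$ as an algebra if and only if it freely generates $\mathcal{P}(H)$ as a Lie algebra, which, for a free Lie algebra, occurs precisely when the elements in each degree project to a basis of the graded abelianization. Under the chosen basis, this projection is exactly the first $a_n$ coordinates of each row, so the basis condition is precisely the stated determinant condition. Surjectivity of $\Xi$ onto the described set follows by running the same reasoning backwards.

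I expect the main obstacle to be a clean deployment of the characterization of free-generating sets of a free Lie algebra in terms of their images in the abelianization. This is a classical fact (see e.g.~\cite{Reutenauer-FreeLieAlgebras}) but must interact carefully with the graded structure and with the passage between free generation in $H$ (associative), in $\mathcal{P}(H)$ (Lie), and a numerical condition on matrices. Assuming the preliminary propositions referenced in the introduction isolate this equivalence and that Theorem~\ref{thm:a sequence} is already in hand, the remainder of the argument is essentially linear-algebraic bookkeeping.
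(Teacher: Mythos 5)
Your proposal follows essentially the same route as the paper: $\Gamma$ is handled via the universal property of the free construction exactly as in the paper's proof (you are in fact slightly more explicit about why the extension $\phi_{\vec{A}}$ is a coalgebra morphism), and $\Xi$ is obtained by expressing the candidate generators in a reference homogeneous basis of $\mathcal{P}(H)_n$ adapted to the splitting $\mathcal{P}(H) = \CC \vec{A} \oplus [\mathcal{P}(H), \mathcal{P}(H)]$ supplied by Proposition~\ref{prop:indecomposable generators}. One small repair is needed: for your claim that ``the projection to the abelianization is exactly the first $a_n$ coordinates of each row'' to be true, the last $p_n - a_n$ basis vectors of $\mathcal{P}(H)_n$ must be chosen inside $[\mathcal{P}(H), \mathcal{P}(H)]_n$, not merely chosen so that the first $a_n$ descend to a basis of the quotient; the paper makes precisely this choice, and it costs nothing since $\dim [\mathcal{P}(H), \mathcal{P}(H)]_n = p_n - a_n$ by Lemma~\ref{lem:deriveddimension}.
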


The proof of Theorem~\ref{thm:OPGmaps} is given in Section~\ref{sec:OPGproof}.  
As we do not explicitly describe our bijection, the second part of Theorem~\ref{thm:OPGmaps} should be read as a statement about how ``big'' the set $\OPG(H)$ is.  
Realizing our bijection requires one to make a rather complicated choice of basis for $\mathcal{P}(H)$, which we explain fully in the proof.  
However, we can illustrate the bijection concretely with a small example.  

\begin{example}
\label{ex:OPG}
We construct the set $\OPG(H)$ for the \FGCCHA $H = \CC\langle x, y, z\rangle$ freely generated by two primitive generators $x$ and $y$ in degree one and one primitive generator $z$ in degree two.  
In the notation of Theorem~\ref{thm:OPGmaps}, this means that $\vec{a} = (2, 1, 0, \ldots)$ and $\vec{p} = \phi_{a, p}(\vec{a}) = (2, 2, \ldots)$. 
The theorem then states that each element of $\OPG(H)$ is unqiely determined by the choice of a $2 \times 2$ invertible matrix $M^{(1)}$ and a $1 \times 2$ matrix $M^{(2)}$ with nonzero $1, 1$ entry.  
In particular, for any
\[
M^{(1)} = \begin{bmatrix}
a & b \\
c & d
\end{bmatrix}
\text{with $ac-bd \neq 0$}
\qquad\text{and}\qquad
M^{(2)} = \begin{bmatrix}
e & f
\end{bmatrix}
\text{with $e \neq 0$},
\]
the corresponding ordered primitive generating set is 
\[
\Big(\big(ax+by,\, cx+dy\big), \big(ez + f(xy-yx)\big), \emptyset, \ldots \Big) \in \OPG(H).
\]
\end{example}

\subsection{Proof of Theorem~\ref{thm:a sequence}}
\label{sec:classificationproof}

We will make use of the following established results
about free Lie algebras from~\cite{Reutenauer-FreeLieAlgebras}.

\begin{proposition}[{\cite[Theorem 0.5]{Reutenauer-FreeLieAlgebras}}]
\label{prop:LieToFree}
For a sequence $\vec{a}$ of a nonnegative integers, the enveloping algebra $\mathcal{U}(\mathfrak{L}(\vec{a}))$ is an \FGCCHA which is generated in degree $n$ by the $a_{n}$-many primitive generators of $\mathfrak{L}(\vec{a})$.
\end{proposition}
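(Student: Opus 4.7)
The plan is to verify the defining properties of an \FGCCHA for $\mathcal{U}(\mathfrak{L}(\vec{a}))$ one at a time, leaving the substantive freeness claim for last. The free Lie algebra $\mathfrak{L}(\vec{a})$ inherits a positive grading from the degree assignment $\mathsf{deg}(x_i^{(n)}) = n$ in Equation~\eqref{eq:La}, extended so that brackets add degrees; its universal enveloping algebra therefore carries a nonnegative grading with degree-$0$ part equal to $\CC$, making $\mathcal{U}(\mathfrak{L}(\vec{a}))$ connected graded. The standard Hopf algebra structure on $\mathcal{U}(L)$ for any Lie algebra $L$ is defined so that every element of $L$ is primitive; since $\mathfrak{L}(\vec{a})$ generates $\mathcal{U}(\mathfrak{L}(\vec{a}))$ as an algebra and the primitives form a sub-Lie algebra under the commutator bracket, $\mathcal{U}(\mathfrak{L}(\vec{a}))$ is cocommutative (and an antipode exists automatically for any connected graded bialgebra).

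The substantive step is freeness as an algebra on $X_{\vec{a}}$, for which the plan is to construct mutually inverse graded algebra homomorphisms between $\mathcal{U}(\mathfrak{L}(\vec{a}))$ and the tensor algebra $\mathsf{T}(\CC X_{\vec{a}})$. On one side, the inclusion $X_{\vec{a}} \hookrightarrow \mathsf{T}(\CC X_{\vec{a}})$ lands inside a subset closed under the commutator bracket, so by Definition~\ref{defn:freelie} it factors through a Lie algebra embedding $\mathfrak{L}(\vec{a}) \hookrightarrow \mathsf{T}(\CC X_{\vec{a}})$; this extends by the universal property of $\mathcal{U}$ to a unique associative algebra map $f : \mathcal{U}(\mathfrak{L}(\vec{a})) \to \mathsf{T}(\CC X_{\vec{a}})$. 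On the other side, the universal property of the tensor algebra supplies a unique associative algebra map $g : \mathsf{T}(\CC X_{\vec{a}}) \to \mathcal{U}(\mathfrak{L}(\vec{a}))$ extending $X_{\vec{a}} \hookrightarrow \mathcal{U}(\mathfrak{L}(\vec{a}))$. Because both composites $f \circ g$ and $g \circ f$ fix $X_{\vec{a}}$, they are identities on their respective algebras, yielding $\mathcal{U}(\mathfrak{L}(\vec{a})) \cong \mathsf{T}(\CC X_{\vec{a}})$ as graded algebras.

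The main obstacle hides in the first step of the previous paragraph: one needs that the Lie subalgebra of $\mathsf{T}(\CC X_{\vec{a}})$ generated by $X_{\vec{a}}$ under the commutator bracket is genuinely a copy of the abstract free Lie algebra $\mathfrak{L}(\vec{a})$, with no extraneous relations. This is the content of Reutenauer's realization of $\mathfrak{L}(\vec{a})$ and depends on the Poincar\'{e}--Birkhoff--Witt theorem to guarantee linear independence of iterated Lie brackets inside the tensor algebra; without PBW the map $f$ above could fail to be injective. Granting this, the generator claim is immediate: by construction $|X_{\vec{a}}^{(n)}| = a_n$, each $x_i^{(n)} \in \mathfrak{L}(\vec{a})$ is primitive in $\mathcal{U}(\mathfrak{L}(\vec{a}))$ by the defining property of the enveloping algebra coproduct, and $X_{\vec{a}}$ freely generates $\mathcal{U}(\mathfrak{L}(\vec{a}))$ as an algebra by the isomorphism just established.
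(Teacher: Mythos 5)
Your proposal is correct in substance, but be aware that the paper offers no proof of Proposition~\ref{prop:LieToFree} at all: it is imported as a citation of Reutenauer's Theorem~0.5, so you are comparing a self-contained argument against a black box. Your argument is the standard one and it works: the grading and connectedness come from the grading on $\mathfrak{L}(\vec{a})$, cocommutativity from the fact that $\Delta$ and its flip are algebra maps agreeing on the Lie-algebra generators (your stated reason --- that the primitives form a Lie subalgebra --- is not the right justification, but the correct one is a one-line substitution), and freeness from the mutually inverse algebra maps $f$ and $g$. The one point worth correcting is your final paragraph. Because Definition~\ref{defn:freelie} takes $\mathfrak{L}(X_{\vec{a}})$ to be, \emph{by definition}, the smallest bracket-closed graded subspace of $\mathsf{T}(\CC X_{\vec{a}})$ containing $X_{\vec{a}}$ --- a concrete Lie subalgebra of the tensor algebra, not an abstract free Lie algebra presented by a universal property --- the inclusion $\mathfrak{L}(\vec{a}) \hookrightarrow \mathsf{T}(\CC X_{\vec{a}})$ is tautologically a Lie homomorphism, and your identities $f\circ g = \mathrm{id}$ and $g\circ f = \mathrm{id}$, each verified on the algebra generating set $X_{\vec{a}}$, already force $f$ to be injective. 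So the appeal to the Poincar\'{e}--Birkhoff--Witt theorem that you single out as the ``main obstacle'' is not needed for the statement as the paper phrases it; PBW (or a Hall/Lyndon basis argument) enters only if one wants to identify this concrete Lie subalgebra with the abstractly presented free Lie algebra, or to compute its graded dimensions --- which is indeed what the paper needs later, in Proposition~\ref{prop:SequencesAsHopfStructures}, but not here.
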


We now prove Theorem~\ref{thm:a sequence}; recall Proposition~\ref{prop:SequencesAsHopfStructures}.

\begin{proof}[Proof of Theorem~\ref{thm:a sequence}]
Given a sequence $\vec{a} \in \NN^{\ZZ_+}$, Proposition~\ref{prop:LieToFree} states that
$\mathcal{U}(\mathfrak{L}(\vec{a}))$ is an \FGCCHA freely generated by a set with graded
cardinality $\vec{a}$, so the map $\Phi$ is well defined.
Proposition~\ref{prop:LieToFree} also tells us that the map is injective: if $\vec{a}, \vec{b}  \in \NN^{\ZZ_+}$ have $a_{i} \neq b_{i}$ for some $i$, then the number of primitive generators of $\mathcal{U}(\mathfrak{L}(\vec{a}))$ and $\mathcal{U}(\mathfrak{L}(\vec{b}))$ must differ in degree $i$, and therefore $\mathcal{U}(\mathfrak{L}(\vec{a})) \not\cong \mathcal{U}(\mathfrak{L}(\vec{b}))$.


To show that $\Phi$ is surjective, fix an isomorphism class $[H]$ of \FGCCHA with representative $H$ and let $\vec{h} = \vecdim(H)$.
By Proposition~\ref{prop:SequencesAsHopfStructures}, $\vec{a} = \phi_{h, a}(\vec{h})$ is the graded cardinality of the free generating set of $H$, and therefore $\vec{a} \in \mathbb{N}^{\mathbb{Z}_{+}}$; we now show that $\Phi(\vec{a}) = [H]$.  
By Proposition~\ref{prop:SequencesAsHopfStructures}, we have
\[
\vecdim(\mathcal{U}(\mathfrak{L}(\vec{a}))) = \phi_{a, h}( \phi_{h, a}(\vecdim(H))) = \vecdim{H}.
\]
Aliniaeifard and Thiem's result (Theorem~\ref{thm:AT}) now implies that $\mathcal{U}(\mathfrak{L}(\vec{a})) \cong H$, so that $\Phi(\vec{a}) = [\mathcal{U}(\mathfrak{L}(\vec{a}))] = [H]$ as desired.  
\end{proof}

\subsection{Proof of Theorem~\ref{thm:OPGmaps}}
\label{sec:OPGproof}

We now consider the problem of explicitly constructing isomorphisms between \FGCCHAs via the set $\OPG(H)$ defined at the beginning of the section.  Our proofs in this section and the next will make use of the subspaces
\[
H_{+} = \bigoplus_{n \geq 1} H_n
\qquad\text{and}\qquad 
H^{2}_{+} = \mu(H_{+} \otimes H_{+}).
\]
The space $H^{2}_{+}$ is a graded subspace of $H$ with
\begin{equation}
\label{eq:Hplusdef}
(H^{2}_{+})_{n} = \bigoplus_{\substack{\alpha \vDash n \\ \alpha \neq (n)}}
H_{\alpha_{1}} H_{\alpha_{2}}\cdots H_{\alpha_{\ell}} \subseteq H_{n}.
\end{equation}
By definition (see the beginning of Section~\ref{sec:HopfAlgebra}), the degree $n$ generators of $H$ descend to a basis of $H_{n}/ (H^{2}_{+})_{n}$, and therefore for any graded generating set $\vec{A} = \biguplus_{n \ge 1} \vec{A}^{(n)}$ of $H$, 
\[
\dim(H_{n}/(H^{2}_{+})_{n}) = |\vec{A}^{(n)}|.
\]
We also use an additional relation between $H_{+}/H^{2}_{+}$ and generators of $H$ which appears in~\cite{F23}.

\begin{proposition}[{\cite[Proposition 2.2 and 2.4]{F23}, \cite[Theorem 7.5]{MM65}}]
\label{prop:indecomposable generators}
If $H$ is a free graded connected algebra and $\vec{A} \subseteq H$ is a graded set with $H_{+} = \CC \vec{A} \oplus H_{+}^{2}$, then $\vec{A}$ freely generates $H$.  If moreover $H$ is a Hopf algebra and $A \subseteq \mathcal{P}(H)$, then $\mathcal{P}(H) = \CC \vec{A} \oplus [\mathcal{P}(H), \mathcal{P}(H)]$.
\end{proposition}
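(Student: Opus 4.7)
The plan is to handle the two claims separately. For the first claim, I would argue in two steps. By induction on the grading, show that $\vec{A}$ generates $H$ as an algebra: any $v \in H_{n}$ decomposes via the hypothesis as a linear combination of elements of $\vec{A}^{(n)}$ plus an element of $(H_{+}^{2})_{n}$, and the latter is, by definition of $H_{+}^{2}$, a sum of products of elements of strictly smaller positive degree, which lie in the subalgebra generated by $\vec{A}$ by the inductive hypothesis. This yields a surjective graded algebra homomorphism $\psi: \mathsf{T}(\CC \vec{A}) \twoheadrightarrow H$. Then, since $H$ is free, its free generators form a basis of $H_{+}/H_{+}^{2}$ in each degree (via the tensor algebra decomposition $\mathsf{T}(V)_{+} = V \oplus \mathsf{T}(V)_{+}^{2}$), so the number of free generators of $H$ in degree $n$ is $\dim(H_{n}/(H_{+}^{2})_{n}) = |\vec{A}^{(n)}|$. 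Consequently $\mathsf{T}(\CC \vec{A})$ and $H$ share the same graded dimension, and the graded surjection $\psi$ must be an isomorphism.

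For the second claim, I read the hypothesis as including $\vec{A} \subseteq \mathcal{P}(H)$, so that $\CC \vec{A} \subseteq \mathcal{P}(H) \subseteq H_{+}$. Restricting the decomposition $H_{+} = \CC \vec{A} \oplus H_{+}^{2}$ to $\mathcal{P}(H)$ gives $\mathcal{P}(H) = \CC \vec{A} \oplus (\mathcal{P}(H) \cap H_{+}^{2})$, so the claim reduces to proving
\[
\mathcal{P}(H) \cap H_{+}^{2} = [\mathcal{P}(H), \mathcal{P}(H)].
\]
The inclusion $\supseteq$ is immediate, since $[x,y] = xy - yx \in H_{+}^{2}$ whenever $x, y \in \mathcal{P}(H) \subseteq H_{+}$. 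For the reverse inclusion I would use the Cartier--Milnor--Moore identification $H \cong \mathcal{U}(\mathcal{P}(H))$, form the abelianization $L^{\mathrm{ab}} = \mathcal{P}(H)/[\mathcal{P}(H), \mathcal{P}(H)]$, and extend the quotient to a surjective Hopf algebra map $H \cong \mathcal{U}(\mathcal{P}(H)) \twoheadrightarrow \mathcal{U}(L^{\mathrm{ab}})$. This induces a map on indecomposables $H_{+}/H_{+}^{2} \to \mathcal{U}(L^{\mathrm{ab}})_{+}/\mathcal{U}(L^{\mathrm{ab}})_{+}^{2} \cong L^{\mathrm{ab}}$, and the composition $\mathcal{P}(H) \hookrightarrow H_{+} \twoheadrightarrow H_{+}/H_{+}^{2} \to L^{\mathrm{ab}}$ agrees with the canonical quotient $\mathcal{P}(H) \twoheadrightarrow L^{\mathrm{ab}}$; its kernel $\mathcal{P}(H) \cap H_{+}^{2}$ therefore lies in $[\mathcal{P}(H), \mathcal{P}(H)]$.

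The main obstacle is the identity $\mathcal{P}(H) \cap H_{+}^{2} = [\mathcal{P}(H), \mathcal{P}(H)]$; the remaining ingredients are either degree-wise inductions or dimension counts. The underlying fact is the standard identification of the indecomposables $\mathcal{U}(L)_{+}/\mathcal{U}(L)_{+}^{2}$ with the abelianization $L/[L,L]$, and the subtlety is to invoke Cartier--Milnor--Moore (valid in the \FGCCHA setting, where cocommutativity is guaranteed) so that $H$ is first realized as $\mathcal{U}(\mathcal{P}(H))$ before the abelianization trick is applied.
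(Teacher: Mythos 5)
Your proof is correct, but there is nothing in the paper to compare it against line by line: the authors do not prove Proposition~\ref{prop:indecomposable generators}, they import it verbatim from \cite[Propositions 2.2 and 2.4]{F23} and \cite[Theorem 7.5]{MM65}. What you have written is a sound self-contained reconstruction. The first half (generation by induction on degree, then the count $\dim(H_{n}/(H_{+}^{2})_{n}) = |\vec{A}^{(n)}|$ forcing the graded surjection $\mathsf{T}(\CC\vec{A}) \twoheadrightarrow H$ to be an isomorphism) is the standard argument; the one hypothesis it uses silently is that each $H_{n}$ is finite dimensional, which holds throughout the paper since all dimension sequences live in $\NN^{\ZZ_{+}}$. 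For the second half, your reading that $\vec{A} \subseteq \mathcal{P}(H)$ is forced --- the conclusion is meaningless otherwise, and every application in the paper (to elements of $\OPG(H)$, or to the image of the Eulerian idempotent) satisfies it. The reduction to $\mathcal{P}(H) \cap H_{+}^{2} = [\mathcal{P}(H), \mathcal{P}(H)]$ and the abelianization argument through $\mathcal{U}(\mathcal{P}(H)) \twoheadrightarrow \mathcal{U}(L^{\mathrm{ab}}) = S(L^{\mathrm{ab}})$ are correct: a primitive lying in $H_{+}^{2}$ dies in $S(L^{\mathrm{ab}})_{+}/S(L^{\mathrm{ab}})_{+}^{2} \cong L^{\mathrm{ab}}$, hence lies in the derived subalgebra. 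Note also that the cocommutativity you need for Cartier--Milnor--Moore is automatic: once the first half identifies $H \cong \mathsf{T}(\CC\vec{A})$ with $\vec{A}$ primitive, the coproduct is cocommutative. A slightly shorter route to the second claim, closer in spirit to the cited sources, is to observe that this identification forces $\mathcal{P}(H) = \mathfrak{L}(\vec{A})$ by Friedrichs' criterion \cite[Theorem 1.4]{Reutenauer-FreeLieAlgebras}, and the decomposition $\mathfrak{L}(X) = \CC X \oplus [\mathfrak{L}(X), \mathfrak{L}(X)]$ of a free Lie algebra is standard; your abelianization argument is longer but buys the identity $\mathcal{P}(H) \cap H_{+}^{2} = [\mathcal{P}(H), \mathcal{P}(H)]$ for an arbitrary graded connected enveloping algebra, not only a free one.
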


\begin{proof}[Proof of Theorem \ref{thm:OPGmaps}]
Let $\vec{a} = \phi_{h,a}(\vecdim(H))$ and $X_{\vec{a}}$ be an element of $\OPG(\mathcal{U}(\mathfrak{L}(\vec{a})))$.
We first consider the map 
\[
\begin{array}{rcl}
\Gamma: \{\text{graded Hopf algebra isomorphisms $\mathcal{U}(\mathfrak{L}(\vec{a})) \to H$}\} &\to& \OPG(H) \\
\phi & \mapsto & \phi(X_{\vec{a}}) \\
\end{array},
\]
where $\phi(X_{\vec{a}})$ is the  ordered set
\[
\phi(X_{\vec{a}}) = \biguplus_{n \ge 0} \big( \phi(x_{1}^{(n)}), \phi(x_{2}^{(n)}), \ldots, \phi(x_{a_{n}}^{(n)}) \big) \in \OPG(H).
\]
We show that $\Gamma$ is a bijection.  The first step is to observe that $\Gamma$
is well-defined: if $\phi: \mathcal{U}(\mathfrak{L}(\vec{a})) \to H$
is an isomorphism then the set $\phi(X_{\vec{a}})$ must freely generate $H$.
Moreover as $\phi$ is a graded isomorphism and $x_{i}^{(n)}$ is primitive
of degree $n$ for all $n \ge 1$ and $1 \le i \le a_{n}$, it follows that
$\phi(x_{i}^{(n)})$ is also primitive of degree $n$.
The injectivity of $\Gamma$ follows from the fact that any two algebra
morphisms which agree on a generating set must be equal.
For surjectivity, consider an arbitrary
$\vec{A} = \big((\alpha_{i}^{(n)})_{i=1}^{a_{n}}\big)_{n \ge 1} \in \OPG(H)$
and define $\phi_{\vec{A}}:\mathcal{U}(\mathfrak{L}(\vec{a})) \to H$ by
algebraically extending the mapping of generators $\phi_{\vec{A}}: x_{i}^{(n)} \mapsto \alpha_i^{(n)}$.
Then $\phi_{\vec{A}}$ is an isomorphism and $\phi_{\vec{A}}(X_{\vec{a}}) = \vec{A} \in \OPG(H)$.

We now define a bijection
\[
\begin{array}{rcl}
\Xi: \OPG(H) &\to& \left\{ \begin{array}{c}
\text{Sequences $\big(M^{(n)} \in \mathrm{Mat}_{a_{n} \times p_{n}}(\CC)\big)_{n=1}^{\infty}$ } \\
\text{with $\det\big( (M^{(n)}_{i, j})_{1 \le i, j \le a_{n}} \big) \neq 0$ for $n \ge 1$}
\end{array}\right\}
\end{array}.
\]
Our definition depends on a choice of $\vec{A} \in \OPG(H)$ and a homogeneous basis 
\[
\biguplus_{n \geq 1}\{v_1^{(n)},v_2^{(n)},\ldots,v_{p_n - a_n}^{(n)}\}
\]
of the derived subalgebra $[\mathcal{P}(H),\mathcal{P}(H)]$  (see Equation~\eqref{eq:derivedLiesubalgebra}), which we now fix.  
As $\vec{A} \subseteq \mathcal{P}(H)$, it follows from Proposition~\ref{prop:indecomposable generators} that $\mathcal{P}(H) = \CC \vec{A} \oplus [\mathcal{P}(H), \mathcal{P}(H)]$ so the set $\{\alpha_{1}^{(n)}, \ldots, \alpha_{a_{n}}^{(n)}, v_{1}^{(n)}, \ldots, v_{p_n - a_n}^{(n)}\}$ is a basis for the degree $n$ component of $\mathcal{P}(H)$.  
For any $\vec{B} = \big((\beta_{i}^{(n)})_{i=1}^{a_{n}}\big)_{n \ge 1} \in \OPG(H)$, we define $M^{(n)}$ to be the matrix whose rows express each $\beta_{i}^{(n)}$ in this basis, and set
\[
\Xi(\vec{B}) = (M^{(n)})_{n \ge 1}.
\]
The matrix $(M^{(n)}_{i, j})_{1 \le i, j \le a_{n}}$ gives the change of basis from $\vec{A}_{n}$ to $\vec{B}_{n}$ in the quotient $\mathcal{P}(H)_{n}/[\mathcal{P}(H),\mathcal{P}(H)]_{n}$.  Since this is invertible, it has a nonzero determinant.

%
%
    On the other hand, given a sequence $\left(M^{(n)}\right)_{n \geq 1}$ with $M^{(n)} \in \textrm{Mat}_{a_n \times p_n}$ and
    \[\det\big( (M^{(n)}_{i, j})_{1 \le i, j \le a_{n}} \big) \neq 0,\] set
    \[
        \beta_i^{(n)} = \sum_{j = 1}^{a_n} M^{(n)}_{i, j}\alpha_{j}^{(n)} + \sum_{j = 1}^{p_n - a_{n}} M^{(n)}_{i, j+a_{n}}v_{j}^{(n)}.
    \]
    Then the ordered set
    \[
        \vec{B} = \biguplus_{n\geq 1} \left(\beta_1^{(n)},\beta_2^{(n)},\ldots,\beta_{a_n}^{(n)}\right)
    \]
    descends to a basis of $H_{+}/H_{+}^{2}$ and therefore generates $H$ by Proposition~\ref{prop:indecomposable generators}.  Since $\vec{B} \subseteq \mathcal{P}(H)$, $\vec{B} \in \OPG(H)$, and moreover $\Xi(\vec{B}) = (M^{(n)})_{n \ge 0}$.  Therefore we have constructed an inverse of $\Xi$, showing that it is a bijection.
\end{proof}

\section{Classifying surjections between \FGCCHAs}
\label{sec:surjection}
We now consider the problem of constructing surjective homomorphisms between two \FGCCHAs, $H$ and $K$, beginning with a naive approach to this problem.  
Let $\vec{A} \in \OPG(H)$ and $\vec{B} \in \OPG(K)$ be ordered primitive generating sets as in Definition~\ref{def:OPG}, and let $\vec{a} = \veccard(\vec{A})$ and $\vec{b} = \veccard(\vec{B})$. 
If $\vec{a} \ge \vec{b}$, projecting from $\vec{A}$ to $\vec{B}$ gives a surjective homomorphism:
\begin{equation}
\label{eq:defaultsurjection}
\begin{array}{rcl}
f^{\vec{A}}_{\vec{B}}: H & \to & K \\[1ex]
\alpha^{(n)}_{i} & \mapsto & \begin{cases}
\beta^{(n)}_{i} & \text{for $1 \le i \le b_{n}$} \\
0 & \text{otherwise}
\end{cases}
\end{array}
\end{equation}
where $\alpha^{(n)}_{i}$ and $\beta^{(n)}_{i}$ denote the
$i$th element of the degree $n$ parts of $\vec{A}$ and $\vec{B}$, respectively.  
We find that this construction actually accounts for all surjections between \FGCCHAs.

\begin{theorem}
\label{thm:surjection}
Let $H$ and $K$ be \FGCCHAs which are generated by sets of graded cardinality $\vec{a}$ and $\vec{b}$, respectively.  
\begin{enumerate}
\item There is a surjective homomorphism $f: H \to K$ if and only if $\vec{a} \ge \vec{b}$.

\item For every surjective homomorphism $f: H \to K$, there exists a (nonunique) choice of
ordered primitive generating sets $\vec{A} \in \OPG(H)$ and $\vec{B} \in \OPG(K)$
for which $f = f^{\vec{A}}_{\vec{B}}$.

\end{enumerate}
\end{theorem}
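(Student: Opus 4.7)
The plan is to handle the two parts of the theorem separately while leveraging a common observation: any surjective Hopf algebra homomorphism $f: H \to K$ restricts to a surjective Lie algebra homomorphism $\mathcal{P}(H) \to \mathcal{P}(K)$. This fact follows from the Cartier--Milnor--Moore theorem, since $f$ corresponds to a map of universal enveloping algebras whose primitive subspaces are $\mathcal{P}(H)$ and $\mathcal{P}(K)$ respectively.

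For part (1), the ``if'' direction is a routine verification of the construction in Equation~\eqref{eq:defaultsurjection}. Freeness of $H$ ensures that $f^{\vec{A}}_{\vec{B}}$ extends to an algebra homomorphism, primitivity of the targets $\beta_i^{(n)}$ (and of the zero element) ensures it respects the coproduct, and surjectivity is immediate since the image contains a generating set of $K$. For the ``only if'' direction, I will observe that a surjection $f: H \to K$ sends $H_+$ into $K_+$ and $H_+^2$ into $K_+^2$, so it descends to a graded surjection of quotient vector spaces $H_+/H_+^2 \to K_+/K_+^2$. Since the discussion following Equation~\eqref{eq:Hplusdef} identifies these quotients as having graded dimensions $\vec{a}$ and $\vec{b}$ respectively, this forces $\vec{a} \ge \vec{b}$.

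For part (2), starting with a given surjection $f$, I will first choose any ordered primitive generating set $\vec{B} = (\beta_i^{(n)}) \in \OPG(K)$. Using the induced surjection on primitives, I will lift each $\beta_i^{(n)}$ to a primitive element $\alpha_i^{(n)} \in \mathcal{P}(H)_n$ with $f(\alpha_i^{(n)}) = \beta_i^{(n)}$ for $1 \le i \le b_n$. These lifts are automatically linearly independent modulo $[\mathcal{P}(H), \mathcal{P}(H)]$ because their images are so in $\mathcal{P}(K)/[\mathcal{P}(K), \mathcal{P}(K)]$. The goal is then to extend each degree-$n$ block by $a_n - b_n$ further elements chosen in $\mathcal{P}(H) \cap \ker(f)$, so that the resulting set completes a basis of the indecomposable quotient $\mathcal{P}(H)/[\mathcal{P}(H), \mathcal{P}(H)]$. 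Proposition~\ref{prop:indecomposable generators} will then guarantee that this extended set lies in $\OPG(H)$, and because $f$ agrees with $f^{\vec{A}}_{\vec{B}}$ on every generator, the two algebra homomorphisms must coincide.

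The main obstacle is the extension step, which requires showing that $\ker(f) \cap \mathcal{P}(H)$ surjects onto the kernel of the induced map $\mathcal{P}(H)/[\mathcal{P}(H), \mathcal{P}(H)] \to \mathcal{P}(K)/[\mathcal{P}(K), \mathcal{P}(K)]$. The key trick is to first verify that $f$ restricts to a surjection $[\mathcal{P}(H), \mathcal{P}(H)] \to [\mathcal{P}(K), \mathcal{P}(K)]$, which follows by lifting individual commutators termwise using the surjectivity on primitives. With this in hand, whenever $x \in \mathcal{P}(H)$ has $f(x) \in [\mathcal{P}(K), \mathcal{P}(K)]$, I can pick $y \in [\mathcal{P}(H), \mathcal{P}(H)]$ with $f(y) = f(x)$ and replace $x$ by $x - y \in \ker(f)$ without altering its class modulo the derived subalgebra, producing the needed lifts.
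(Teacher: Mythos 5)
Your part (1) is essentially the paper's own argument: the ``if'' direction is the construction of Equation~\eqref{eq:defaultsurjection}, and the ``only if'' direction is the induced surjection $H_{n}/(H_{+}^{2})_{n} \to K_{n}/(K_{+}^{2})_{n}$ together with the dimension count from Equation~\eqref{eq:Hplusdef}. Part (2) is also correct, but you take a genuinely different route. The paper never passes through the surjection $\mathcal{P}(H) \to \mathcal{P}(K)$ or the derived subalgebras: it lifts an indecomposables basis of $K_{n}/(K_{+}^{2})_{n}$ to elements $x^{(n)}_{i} \in H_{n}$ with $f(x^{(n)}_{i}) = \beta^{(n)}_{i}$ or $0$, and then applies the Eulerian idempotent $\mathbf{e}$ of Lemma~\ref{lemma:primitive generators} to everything; since $\mathbf{e}$ commutes with Hopf morphisms and fixes primitives, the identity $f(\mathbf{e}(x^{(n)}_{i})) = \mathbf{e}(f(x^{(n)}_{i}))$ primitivizes the generators and preserves their images in one stroke, so no correction into $\ker f$ is needed. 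Your version instead requires two facts you must supply: that $f$ restricts to a graded surjection $\mathcal{P}(H) \to \mathcal{P}(K)$ (true, but it deserves a line of proof --- e.g.\ via $f \circ \mathbf{e} = \mathbf{e} \circ f$, or by noting that the image of $f$ is the enveloping algebra of $f(\mathcal{P}(H))$), and that $f$ maps $[\mathcal{P}(H),\mathcal{P}(H)]$ onto $[\mathcal{P}(K),\mathcal{P}(K)]$ by lifting commutators termwise; your replacement $x \mapsto x - y$ with $y$ a commutator lift is exactly the right adjustment. One small caution: you invoke Proposition~\ref{prop:indecomposable generators} in the converse direction to how it is stated, passing from ``primitives complementary to $[\mathcal{P}(H),\mathcal{P}(H)]$ in $\mathcal{P}(H)$'' to ``complementary to $H_{+}^{2}$ in $H_{+}$, hence free generators.'' This implication is true (compare against a fixed ordered primitive generating set, using $[\mathcal{P}(H),\mathcal{P}(H)] \subseteq H_{+}^{2}$ and equality of graded dimensions), and the paper uses it the same way in the proof of Theorem~\ref{thm:OPGmaps}, but it merits a sentence. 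Net comparison: the paper's proof is shorter because $\mathbf{e}$ absorbs all the bookkeeping; yours is more explicit about the induced map on primitives, which is information the paper's proof leaves implicit.
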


The proof of Theorem~\ref{thm:surjection} is given after Lemma~\ref{lemma:primitive generators}, following some intermediate examples and results.  We also compare and contrast this result with a result of Aguiar--Lauve \cite{AL13} after Remark \ref{rem:isomorphism_maps}.

\begin{example}
\label{ex:quotientHopf}
Recall the \FGCCHA from Example~\ref{ex:OPG}, $H = \CC\langle x, y, z \rangle$.
Also let $K = \CC\langle u, v \rangle$ denote an \FGCCHA
which is freely generated by two degree-one primitive elements $u$ and $v$. 
By Theorem \ref{thm:surjection} (1), since $(2,1,0,\ldots) \geq (2,0,0,\ldots)$,
there is at least one surjective homomorphism from $H$ to $K$.  
We define $f: H \to K$ by
\[
f(x) = u,
\qquad
f(y) = v,
\qquad\text{and}\qquad
f(z) = uv-vu,
\]
which does not obviously match the format of Equation~\eqref{eq:defaultsurjection}.  However, if we write $f$ using a slightly different set of ordered primitive generators,
\[
\vec{A} = \Big(\big( x, y\big), \big( z - xy + yx \big), \emptyset, \ldots \Big)
\qquad
\text{and}
\qquad
\vec{B} =  \Big(\big( u, v\big), \emptyset, \ldots \Big),
\]
then we have 
\[
f(x) = u,
\qquad
f(y) = v,
\qquad\text{and}\qquad
f(z - xy + yx) = 0,
\]
so we have found the $\vec{A}$ and $\vec{B}$ for which $f = f^{\vec{A}}_{\vec{B}}$ as predicted by the theorem.
\end{example}

\begin{corollary}
Given a surjective homomorphism $f: H \to K$ between two \FGCCHAs,
\begin{enumerate}
\item $K$ is isomorphic to a Hopf subalgebra of $H$.

\item Every other surjective homomorphism $g: H \to K$ has the form
$g = \gamma \circ f \circ \tau$ for some automorphims $\gamma$ of $K$ and $\tau$ of $H$.

\end{enumerate}
\end{corollary}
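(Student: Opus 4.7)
The plan is to leverage Theorem~\ref{thm:surjection}(2) to express every surjection in the canonical form $f^{\vec{A}}_{\vec{B}}$, reducing both parts of the corollary to direct manipulations of ordered primitive generating sets.

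First I would fix $\vec{A} \in \OPG(H)$ and $\vec{B} \in \OPG(K)$ with $f = f^{\vec{A}}_{\vec{B}}$, and write $\vec{b} = \veccard(\vec{B})$. For part~(1), I would consider the sub-tuple $\vec{A}_{0} = \biguplus_{n \ge 1} (\alpha_{1}^{(n)}, \ldots, \alpha_{b_{n}}^{(n)}) \subseteq \vec{A}$ consisting of those generators of $H$ not killed by $f$. Since $\vec{A}_{0}$ inherits algebraic independence from $\vec{A}$, the subalgebra $H' \subseteq H$ it generates is free on $\vec{A}_{0}$; and because every element of $\vec{A}_{0}$ is primitive, $\Delta(H') \subseteq H' \otimes H'$, so $H'$ is a sub-Hopf-algebra -- in fact an \FGCCHA with $\vec{A}_{0} \in \OPG(H')$. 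By construction, $f$ restricts to a bijection of the free generators $\vec{A}_{0}$ onto the free generators $\vec{B}$, and hence to a Hopf algebra isomorphism $H' \cong K$, giving the embedding $K \cong H' \hookrightarrow H$ claimed in~(1).

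For part~(2), I would also apply Theorem~\ref{thm:surjection}(2) to $g$ to write $g = f^{\vec{A}'}_{\vec{B}'}$ for some $\vec{A}' \in \OPG(H)$ and $\vec{B}' \in \OPG(K)$. Using the bijection $\Gamma$ of Theorem~\ref{thm:OPGmaps} -- which relates $\OPG(H)$ to Hopf isomorphisms $\mathcal{U}(\mathfrak{L}(\vec{a})) \to H$ -- I would construct Hopf algebra automorphisms $\tau \in \operatorname{Aut}(H)$ carrying $\vec{A}'$ entrywise to $\vec{A}$ and $\gamma \in \operatorname{Aut}(K)$ carrying $\vec{B}$ entrywise to $\vec{B}'$. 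The identity $g = \gamma \circ f \circ \tau$ then reduces to a routine check on the free generators of $H$: the composition first translates $\vec{A}'$ into $\vec{A}$, then applies the projection $f^{\vec{A}}_{\vec{B}}$, then translates $\vec{B}$ into $\vec{B}'$, exactly mirroring $f^{\vec{A}'}_{\vec{B}'} = g$.

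The main obstacle -- really the only subtlety -- is verifying that these constructions yield Hopf algebra (not merely algebra) maps: that $H'$ in part~(1) is closed under $\Delta$, and that $\tau, \gamma$ in part~(2) commute with $\Delta$. Both follow from one principle: an algebra morphism between \FGCCHAs that sends a primitive generating set to primitive elements automatically preserves the coproduct, since $\Delta$ is determined by its values on such generators. This is already baked into Theorem~\ref{thm:OPGmaps}, which constructs Hopf (rather than merely algebra) isomorphisms out of elements of $\OPG(H)$, so each compatibility becomes essentially immediate once the set-up is in place.
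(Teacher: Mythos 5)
Your proposal is correct and follows essentially the same route as the paper: both parts reduce to Theorem~\ref{thm:surjection}(2) putting every surjection in the form $f^{\vec{A}}_{\vec{B}}$, and part (2) is argued identically via generator-relabelling automorphisms. The only cosmetic difference is in part (1), where the paper packages your subalgebra $H'$ as the image of the section $\kappa\colon K\to H$, $\beta^{(n)}_{i}\mapsto\alpha^{(n)}_{i}$, and deduces injectivity from $f\circ\kappa=\mathrm{id}_{K}$ rather than verifying freeness of $H'$ directly.
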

\begin{proof}
Let $f$ be a surjective homomorphism from $H$ to $K$.  
By Theorem~\ref{thm:surjection}, there exist $\vec{A}  = \big( ( \alpha_{i}^{(n)})_{i = 1}^{a_{n}} \big)_{n \ge 0} \in \OPG(H)$ and $\vec{B}   = \big( ( \beta_{i}^{(n)})_{i = 1}^{b_{n}} \big)_{n \ge 0}  \in \OPG(K)$ such that $f = f^{\vec{A}}_{\vec{B}}$.  

To see (1), define a graded algebra homomorphism $\kappa: K \to H$ as the algebraic extension of $\beta^{(n)}_{i} \mapsto \alpha^{(n)}_{i}$ for all $n$ and $1\le i \le b_{n}$.  
Then $\kappa$ is the left inverse of $f^{\vec{A}}_{\vec{B}}$, so it is injective.  
We claim that  $\kappa$ respects coproducts, which implies that $\kappa(K) \subseteq H$ is a Hopf subalgebra; since $\kappa$ is injective $\kappa(K) \cong K$ and we will have proved (1).  
Since each $\beta^{(n)}_{i}$ and each $\alpha^{(n)}_{i}$ is primitive, it follows that
\[
(\kappa\otimes\kappa)(\Delta(\beta^{(n)}_{i}))
=
\kappa(\beta^{(n)}_{i}) \otimes \kappa(1) + \kappa(1) \otimes \kappa(\beta^{(n)}_{i})
= 
\alpha^{(n)}_{i} \otimes 1 + 1 \otimes \alpha^{(n)}_{i}
\]
and
\[
\Delta(\kappa(\beta^{(n)}_{i}))
=
\Delta(\alpha^{(n)}_{i})
= 
\alpha^{(n)}_{i} \otimes 1 + 1 \otimes \alpha^{(n)}_{i}
=
(\kappa\otimes\kappa)(\Delta(\beta^{(n)}_{i}))
\]
As $\Delta$ is an algebra homomorphism and $K$ and $H$ are freely generated by $\vec{B}$ and $\vec{A}$ as algebras, the above equation implies that $(\kappa\otimes\kappa) \circ \Delta = \Delta \circ \kappa$ as desired, completing the proof of (1).

For (2),  let $g$ be a surjective homomorphism from $H$ to $K$.  
We first note that by Theorem~\ref{thm:surjection},  $g = f^{\vec{C}}_{\vec{D}}$ for some $\vec{C} \in \OPG(H)$ and $\vec{D} \in \OPG(K)$.  We then take $\tau: H \to H$ to be the extension of the order-preserving map which sends $\vec{C}$ to $\vec{A}$, and similarly $\gamma: K \to K$ as the extension of the order-preserving mapping of $\vec{B}$ to $\vec{D}$.  Then $g = \gamma \circ f \circ \tau$.
\end{proof}

Our proof makes use of the \emph{Eulerian idempotent}, also known as the canonical projection or the first Eulerian idempotent (see \cite{P94}, \cite[\S 1.4]{AS05cc} or \cite[\S 4.5.2]{loday2013cyclic}).  This is the map
\begin{equation}
\label{eq:eulerianidempotent}
\mathbf{e}: H \to H
\qquad\text{such that}\qquad
\mathbf{e}(x) = x - \frac{1}{2}\mu \circ \Delta_{+}(x) + \frac{1}{3} \mu^{(2)} \circ \Delta_{+}^{(2)}(x) - \cdots 
\end{equation}
where $\Delta_{+}$ denotes the positive coproduct $\Delta_{+}(x) = \Delta(x) - x \otimes 1 - 1 \otimes x$,
and we write $\mu^{(k)}$ and $\Delta^{(k)}_{+}$ for the $k$-fold self-composition of $\mu$ and $\Delta_{+}$, respectively.  
By associativity and coassociativity of the Hopf algebra, these maps are well defined, and further $\Delta_{+}^{(n)}$ is zero on all elements of degree greater than $n$, so the sum in~\eqref{eq:eulerianidempotent} is always finite.

When $H$ is graded, connected and cocommutative, the Eulerian idempotent is a linear projection onto $\mathcal{P}(H)$~\cite[Theorem 9.4]{S94}.
Following~\cite{P99}, we call the family of all graded linear projections from $H$ onto $\mathcal{P}(H)$ \emph{Lie idempotents}.

Aguiar and Sottile use the Eulerian idempotent to produce primitive generators for the Grossman-Larson
Hopf algebra of heap-ordered trees in~\cite{AS05cc},
as do Novelli and Thibon for a Hopf algebra with dimension sequence given by the Catalan numbers in~\cite[\S 5]{NT05}.
Lauve and Mastnak use a related primitive idempotent to produce primitive generators for
the symmetric functions in noncommuting variables in~\cite{LM11}.  
Foissy and Patras recently showed that this method holds more generally for any \FGCCHA and Lie idempotent~\cite{FP24}, 
generalizing the following result of Patras and Reutenauer~\cite{PR04}.

\begin{lemma}[{\cite[Lemma 22]{PR04}}]
\label{lemma:primitive generators}
Let $H$ be an \FGCCHA freely generated by a graded set $X = \biguplus_{n \geq 1} X_{n}$.  
Then the graded set
\[
\mathbf{e}(X) = \biguplus_{n \ge 1} \{\mathbf{e}(x) \;|\; x \in X_{n}\}
\]
is a complete set of primitive generators of $H$.
\end{lemma}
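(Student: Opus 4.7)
The plan is to verify the two defining conditions separately: that each element of $\mathbf{e}(X)$ is primitive, and that $\mathbf{e}(X)$ freely generates $H$ as an algebra. The primitivity is immediate: the remark after Equation~\eqref{eq:eulerianidempotent} records that on any graded connected cocommutative Hopf algebra the Eulerian idempotent projects onto $\mathcal{P}(H)$, so $\mathbf{e}(x) \in \mathcal{P}(H)$ for every $x \in X$.

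For the generation claim, the strategy is to apply Proposition~\ref{prop:indecomposable generators}: since $H$ is free, it suffices to exhibit $\mathbf{e}(X)$ as a graded subset of $H_+$ whose image in $H_+/H_+^2$ is a basis. Because $X$ already freely generates $H$, the image of $X$ is itself a basis of $H_+/H_+^2$, so the task reduces to verifying the congruence $\mathbf{e}(x) \equiv x \pmod{H_+^2}$ for every $x \in X$. Expanding $\mathbf{e}(x)$ via Equation~\eqref{eq:eulerianidempotent}, every term past the leading $x$ has the form $\pm \tfrac{1}{k+1} \mu^{(k)} \circ \Delta_+^{(k)}(x)$ for some $k \ge 1$; since $\Delta_+$ lands in $H_+ \otimes H_+$ by construction, the iterate $\Delta_+^{(k)}(x)$ lies in $H_+^{\otimes (k+1)}$, so applying $\mu^{(k)}$ yields a sum of products of at least two elements of $H_+$, i.e.\ an element of $H_+^2$. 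There is no convergence concern because $x$ is homogeneous of some degree $n$ and $\Delta_+^{(k)}(x) = 0$ as soon as $k \ge n$, so the series defining $\mathbf{e}(x)$ is a finite sum on each graded piece. Summing, $\mathbf{e}(x) - x \in H_+^2$, so $\mathbf{e}(X)$ and $X$ descend to the same basis of $H_+/H_+^2$, and Proposition~\ref{prop:indecomposable generators} then gives that $\mathbf{e}(X)$ freely generates $H$.

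The main obstacle is essentially nonexistent: the lemma is an assembly of pieces already in hand, and the only real content is the elementary congruence $\mathbf{e}(x) \equiv x \pmod{H_+^2}$. The one subtlety worth emphasizing in the exposition is the interplay between \emph{free} generators and \emph{primitive} generators: the hypothesized set $X$ is only required to freely generate $H$ as an algebra (its elements need not be primitive), and the role of $\mathbf{e}$ is precisely to replace each generator with its projection onto $\mathcal{P}(H)$ without disturbing its class in $H_+/H_+^2$. Proposition~\ref{prop:indecomposable generators} then packages the $H_+/H_+^2$-information as the needed free-generation statement.
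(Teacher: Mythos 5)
Your proof is correct. Note that the paper itself does not prove this lemma---it is imported verbatim as \cite[Lemma 22]{PR04}---so there is no in-paper argument to compare against; what you have written is essentially the standard proof (and the one Patras--Reutenauer give). The two ingredients you use are exactly the right ones: primitivity of $\mathbf{e}(x)$ comes for free from the fact, recorded in the paper after Equation~\eqref{eq:eulerianidempotent}, that $\mathbf{e}$ projects onto $\mathcal{P}(H)$ for graded connected cocommutative $H$; and free generation reduces via Proposition~\ref{prop:indecomposable generators} to the congruence $\mathbf{e}(x) \equiv x \pmod{H_{+}^{2}}$, which you verify correctly by observing that each higher term $\mu^{(k)}\circ\Delta_{+}^{(k)}(x)$ with $k \ge 1$ lands in $\mu\big(H_{+}\otimes H_{+}\big) = H_{+}^{2}$, with the series terminating on each graded piece by connectedness. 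You also rightly flag the one conceptual point worth emphasizing, namely that $X$ is only assumed to be a free \emph{algebra} generating set, not a primitive one, and that $\mathbf{e}$ repairs this without changing classes modulo $H_{+}^{2}$. No gaps.
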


\begin{proof}[Proof of Theorem~\ref{thm:surjection}]
Let $H$ and $K$ be \FGCCHAs which are generated by sets of graded cardinality $\vec{a}$ and $\vec{b}$, respectively.  
If $\vec{a} \ge \vec{b}$, then Equation~\eqref{eq:defaultsurjection} defines a surjective homomorphism between $H$ and $K$, giving the ``if'' part of (1).

To prove the converse and (2), suppose that we have surjective homomorphism
\[
f: H \to K.
\]
For the next part of the argument, fix $n \ge 1$.  
Recall the subspaces $(H_{+}^{2})_{n} \subseteq H_{n}$ and $(K_{+}^{2})_{n} \subseteq K_{n}$ defined in Equation~\eqref{eq:Hplusdef}, and that 
\[
\dim(H_{n} / (H^{2}_{+})_{n}) = a_{n}
\qquad\text{and}\qquad
\dim(K_{n} / (K^{2}_{+})_{n}) = b_{n}.
\]
As $f$ is a graded surjective algebra homomorphism, it must map $H_{n}$ onto $K_{n}$ and $(H^{2}_{+})_{n}$ onto $(K^{2}_{+})_{n}$.  
Therefore $f$ induces a surjective linear transformation from $H_{n} / (H_{+}^{2})_{n}$ to $K_{n} / (K_{+}^{2})_{n}$.  
Surjectivity implies that $a_{n} = \dim(H_{n} / (H_{+}^{2})_{n}) \ge \dim(K_{n} / (K_{+}^{2})_{n})  = b_{n}$.


We continue with our fixed $n$ for (2).  
First choose a tuple of elements $\big( y^{(n)}_{1}, \ldots, y^{(n)}_{b_{n}} \big)$ from $K_{n}$ which descend to a basis of $K_{n} / (K_{+}^{2})_{n}$.
Let $\beta_{i}^{(n)} = \mathsf{e}(y_{i}^{(n)})$ where $\mathsf{e}$ is the 
Eulerian idempotent. 
By Lemma~\ref{lemma:primitive generators} and Proposition~\ref{prop:indecomposable generators} the tuple $\big( \beta^{(n)}_{1}, \ldots, \beta^{(n)}_{b_{n}} \big)$ also descends to a basis of $K_{n} / (K^{2}_{+})_{n}$.  
Since $f$ induces a surjection $H_{n} / (H^{2}_{+})_{n} \to K_{n} / (K^{2}_{+})_{n}$, we may choose an ordered subset $\big( x^{(n)}_{1}, \ldots, x^{(n)}_{a_{n}} \big)$ of $H_{n}$ which descends to a basis of $H_{n}/(H^{2}_{+})_{n}$ and moreover has the property that
\[
f(x^{(n)}_{i}) = \begin{cases} \beta^{(n)}_{i} & \text{if $i \le b_{n}$} \\ 0 & \text{otherwise.} \end{cases}
\]
Now take $\alpha_{i}^{(n)} = \mathsf{e}(x_{i}^{(n)})$. 
By the argument above the set $\big( \alpha^{(n)}_{1}, \ldots, \alpha^{(n)}_{a_{n}} \big)$ is a basis of $H_{n} / (H^{2}_{+})_{n}$.
Moreover $f$ commutes with $\mathsf{e}$: $f \circ \mu^{(k)} = \mu^{(k)} \circ f^{\otimes k}$ because $f$ is a Hopf morphism, and similarly $f^{\otimes k} \circ \Delta_{+} = \Delta_{+} \circ f$ because
\[
\Delta_{+} \circ f
= \Delta \circ f - f \otimes 1 - 1 \otimes f
= (f\otimes f)\circ \Delta - f \otimes 1 - 1 \otimes f
= (f\otimes f)\circ \Delta_{+},
\]
as $f(1) = 1$.  Therefore,
\[
f(\alpha_{i}^{(n)}) = f( \mathsf{e}(x_{i}^{(n)})) = \mathsf{e}(f( x_{i}^{(n)})) = \mathsf{e}(\beta_{i}^{(n)}) = \beta_{i}^{(n)}.
\]

Finally, repeat this construction for all $n \ge 1$.  By Proposition~\ref{prop:indecomposable generators} the sequences 
\[
\vec{A} = \big( (\alpha_{1}^{(n)}, \ldots, \alpha_{a_{n}}^{(n)}) \big)_{n \ge 1}
\qquad\text{and}\qquad
\vec{B} = \big( (\beta_{1}^{(n)}, \ldots, \beta_{b_{n}}^{(n)}) \big)_{n \ge 1}
\]
belong to $\OPG(H)$ and $\OPG(K)$ respectively, and by construction $f = f^{\vec{A}}_{\vec{B}}$.
\end{proof}

\section{Classifying Hopf subalgebras by primitive dimensions}
\label{sec:subalgebras}

Let $H$ be an \FGCCHA.  
In this section we construct all the isomorphism classes of Hopf subalgebras of $H$; in particular we show that every Hopf subalgebra is an \FGCCHA satisfying a simple condition.
To begin, recall the functions $\phi_{a, p}$ and $\phi_{h, a}$ from Definition~\ref{defn:sequencetransfer}, and recall that for any nonnegative sequence $\vec{a} = (a_{1}, a_{2}, \ldots)$, we write $\mathfrak{L}(\vec{a})$ for the free Lie algebra generated by the graded set $X_{\vec{a}}$ with $a_{n}$ elements in degree $n$ as in Equation~\eqref{eq:La}.

\begin{theorem}
\label{thm:subclassification2}
Let $H$ be a free graded connected cocommutative Hopf algebra and let $\vec{p} = \vecdim(\mathcal{P}(H))$.  Then there is a bijection
\[
\begin{array}{rcl}
\left\{\begin{array}{c}
\text{Isomorphism classes of} \\
\text{Hopf subalgebras $K \subseteq H$}
\end{array}\right\}
& \leftrightarrow & 
\left\{\begin{array}{c}
\text{Sequences $\vec{b} \in \NN^{\ZZ_{+}}$} \\
\text{for which $\phi_{a, p}(\vec{b}) \le \vec{p}$} 
\end{array}\right\} \\[1.5em]
K & \mapsto & \phi_{h, a} \big( \vecdim(K)\big) \\
\mathcal{U}(\mathfrak{L}(\vec{b}))) & \mapsfrom & \vec{b}
\end{array}
\]
\end{theorem}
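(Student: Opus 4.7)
The plan is to verify the bijection by showing both maps are well-defined on isomorphism classes and are mutually inverse. For the forward map, I would apply the Shirshov--Witt Theorem (Lemma~\ref{lem:ShiWitt}) to conclude that any Hopf subalgebra $K \subseteq H$ is free as an algebra, and since cocommutativity, connectedness, and gradedness descend to $K$, it is an \FGCCHA. Theorem~\ref{thm:a sequence} then certifies that $\vec{b} := \phi_{h,a}(\vecdim(K)) \in \NN^{\ZZ_+}$, while Proposition~\ref{prop:SequencesAsHopfStructures} identifies $\phi_{a,p}(\vec{b})$ with $\vecdim(\mathcal{P}(K))$. The inclusion $\mathcal{P}(K) \subseteq \mathcal{P}(H)$ then forces $\phi_{a,p}(\vec{b}) \le \vec{p}$, so the image lies in the asserted codomain. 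Theorem~\ref{thm:AT} further guarantees that $\vecdim(K)$ is an invariant of the isomorphism class of $K$, so the forward map descends to isomorphism classes and is injective on them.

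Next I would verify that the two composites are identities. Starting from $\vec{b}$, Theorem~\ref{thm:a sequence} and Proposition~\ref{prop:SequencesAsHopfStructures} give $\vecdim(\mathcal{U}(\mathfrak{L}(\vec{b}))) = \phi_{a,h}(\vec{b})$, so applying $\phi_{h,a}$ recovers $\vec{b}$. Starting from $K$, Theorem~\ref{thm:a sequence} directly states $K \cong \mathcal{U}(\mathfrak{L}(\phi_{h,a}(\vecdim(K))))$, so the composite is the identity on isomorphism classes.

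The main obstacle will be showing the inverse map is well-defined: for every $\vec{b} \in \NN^{\ZZ_+}$ with $\phi_{a,p}(\vec{b}) \le \vec{p}$ one must realize $\mathcal{U}(\mathfrak{L}(\vec{b}))$ as a Hopf subalgebra of $H$. My strategy is to build a free graded Lie subalgebra $L \subseteq \mathcal{P}(H)$ isomorphic to $\mathfrak{L}(\vec{b})$, and then take $K := \mathcal{U}(L)$, which embeds as a Hopf subalgebra into $H = \mathcal{U}(\mathcal{P}(H))$ by the Cartier--Milnor--Moore theorem. I would construct $L$ by induction on degree. Suppose $L^{(<n)} \subseteq \mathcal{P}(H)$ is a free Lie subalgebra with $b_m$ free generators in each degree $m < n$; since no generators sit in degree $n$, every element of $L^{(<n)}_n$ is a commutator, and $\dim L^{(<n)}_n = \phi_{a,p}(b_1, \ldots, b_{n-1}, 0, \ldots)_n$. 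The identity $\phi_{a,p}(\vec{b})_n = \dim L^{(<n)}_n + b_n$ together with the hypothesis $\phi_{a,p}(\vec{b})_n \le p_n$ leaves enough room in $\mathcal{P}(H)_n$ to pick $b_n$ further elements linearly independent modulo $L^{(<n)}_n$. The Shirshov--Witt Theorem for Lie algebras ensures the enlarged subalgebra $L^{(\le n)}$ is free, and a count of primitive-indecomposable generators, via the Lie-algebra form of Proposition~\ref{prop:indecomposable generators} together with the observation that $L^{(\le n)}$ cannot acquire free generators in degrees greater than $n$, confirms $L^{(\le n)} \cong \mathfrak{L}(b_1, \ldots, b_n, 0, \ldots)$ with our chosen elements as free generators. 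Taking the ascending union over $n$ produces the desired $L$.
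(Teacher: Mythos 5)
Your proposal is correct and follows essentially the same route as the paper: injectivity via Theorem~\ref{thm:AT}, and surjectivity via an inductive construction of a tower of free Lie subalgebras of $\mathcal{P}(H)$ using the dimension count through $\phi_{a,p}$, the Shirshov--Witt Theorem, and a generator count in degree $n$. Your opening paragraph is in fact slightly more careful than the paper's proof, which leaves the well-definedness of the forward map (that $K$ is an \FGCCHA and that $\phi_{a,p}(\vec{b}) \le \vec{p}$ via $\mathcal{P}(K) \subseteq \mathcal{P}(H)$) to remarks elsewhere in the text.
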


The proof of Theorem~\ref{thm:subclassification2} will construct,
for each sequence $\vec{b}$, a Hopf subalgebra in $H$ isomorphic to
$\mathcal{U}(\mathfrak{L}(\vec{b}))$.  The proof of this result appears after some intermediate results
and motivating examples.

\begin{remark}\label{rem:isomorphism_maps}
Using the sequence transfer maps from Definition~\ref{defn:sequencetransfer}, Theorem~\ref{thm:subclassification2} implies that isomorphisms of Hopf subalgebras of $H$ are also indexed by sequences $\vec{q} \in \NN^{\ZZ_{+}}$ for which $\vec{p} \ge \vec{q}$ and $\phi_{p, a}(\vec{q}) \in \NN^{\ZZ_{+}}$.
\end{remark}

In~\cite{AL13}, Aguiar and Lauve give an analogue of Lagrange's theorem for Hopf algebras,
which implies a necessary condition for the existence of a Hopf
subalgebra with a given dimension sequence.
Specifically,~\cite[Corollary 1.4]{AL13} states that if $H$ is a graded connected Hopf algebra with
a graded connected Hopf subalgebra $K \subseteq H$, then the ratio of Poincar\'e series
\begin{equation}
\label{eq:ratio_gfs}
\frac{1 + \sum_{n \geq 1} h_n t^n}{1 + \sum_{n \geq 1} k_n t^n}
\qquad
\text{where $\vec{h} = \vecdim(H)$ and $\vec{k} = \vecdim(K)$}
\end{equation}
has non-negative coefficients as a series in $t$.
Expanding Equation~\eqref{eq:ratio_gfs}, this condition is equivalent to 
\[
\sum_{r=0}^n \sum_{\alpha \models n-r} (-1)^{\ell(\alpha)} h_r k_\alpha \geq 0
\qquad\text{for all $n \geq 1$.}
\]

The result from \cite[Corollary 1.4]{AL13} (much like Lagrange's theorem for groups)
is only a necessary condition on the dimension sequences of a Hopf algebra
and subalgebra while Theorem~\ref{thm:subclassification2}
also includes sufficient conditions.
This can be illustrated by example: let $\vec{p} = \phi_{h,p}(\vec{h})$ and
$\vec{q} = \phi_{h,p}(\vec{k})$.
Equation \eqref{eq:gf_relation} implies that the ratio in Equation \eqref{eq:ratio_gfs} is equal to
\[
\prod_{d \geq 1} \frac{1}{(1-t^d)^{p_d - q_d}}~.
\]
We note in the following example a pair of sequences whose ratio of generating
functions is positive, but there does not exist a Hopf subalgebra with the given
dimension sequence.  It also illustrates
a counterintuitive aspect of containment between \FGCCHAs.

\begin{example}
Consider the primitive dimension sequence
$\vec{p} = (1,0,0,\ldots)$ and set $\vec{a} = \phi_{p,a}(\vec{p}) = \vec{p}$
and $\vec{h} = \phi_{p,h}(\vec{p})$ so that for $n \geq 1$,
$h_n$ is equal to 1.
Next set, 
$\vec{q} = (0,1,0,0,\ldots)$ and set $\vec{b} = \phi_{p,a}(\vec{q}) = \vec{q}$ and
$\vec{k} = \phi_{p,h}(\vec{q})$ so that $k_n$ is equal to $1$
if $n$ is even and $0$ otherwise.  Moreover, the ratio of the generating functions
in Equation \eqref{eq:ratio_gfs} is equal to $1+t$.

It is the case that all of these coefficients are non-negative, however by
Theorem \ref{thm:subclassification2}, there does not exist a Hopf subalgebra
of $H := \Phi(\vec{a})$ having dimension sequence $\vec{k}$.

We note that curiously if $\vec{a}=(2,0,0,\ldots)$ and $\vec{b}=(0,1,0, 0,\ldots)$ then
we do have containment of the \FGCCHAs.
\end{example}

\begin{example}
\label{ex:NSymInCxy}
Recall the \FGCCHA $K = \CC\langle u, v \rangle$ from Example~\ref{ex:quotientHopf}, so that $u$ and $v$ have degree one.  Then $\vec{p} = \vecdim(\mathcal{P}(K))$ is the sequence
$\vec{p} = (2, 1, 2, 3, 6, 9, 18, 30,\ldots)$~\cite[\href{https://oeis.org/A001037}{A001037}]{OEIS} given by 
\[
p_{n} = \frac{1}{n} \sum_{d | n} \mu(n/d) 2^{d}.
\]

Then we can take 
\[
\vec{q} = \vec{p} - (1, 0, 0, \ldots)
\]
so that clearly $\vec{p} \ge \vec{q}$.  
The sequence $\vec{q}$ is identified in~\cite[\href{https://oeis.org/A059966}{A059966}]{OEIS}, and direct computation then gives that
\[
\phi_{p, a}(\vec{q}) = (1, 1, 1, \ldots) \in \NN^{\ZZ_{+}},
\]
so this sequence determines a subalgebra isomorphic to $\mathcal{U}(\mathfrak{L}(1, 1, 1, \ldots))$ inside of $K$.   
In Example~\ref{ex:NSym} we identify $\mathcal{U}(\mathfrak{L}(1, 1, 1, \ldots)) \cong \mathsf{NSym}$, so $K$ has a Hopf subalgebra isomorphic to $\mathsf{NSym}$.
\end{example}

We now state and prove an intermediate result in the proof of Theorem~\ref{thm:subclassification2}; the proof of the theorem follows.  Recall the definition of the derived subalgebra $[L, L]$ of a graded Lie algebra $L$ from Equation~\eqref{eq:derivedLiesubalgebra}.

\begin{lemma}
\label{lem:deriveddimension}
Let $\vec{a} = (a_{1}, a_{2}, \ldots) \in \NN^{\ZZ_{+}}$.  Then
\[
\vecdim([\mathfrak{L}(\vec{a}), \mathfrak{L}(\vec{a})]) = \phi_{a, p}(\vec{a}) - \vec{a},
\]
or equivalently $\vec{a} = \vecdim\big(\mathfrak{L}(\vec{a})\big/ [\mathfrak{L}(\vec{a}), \mathfrak{L}(\vec{a})]\big)$.
\end{lemma}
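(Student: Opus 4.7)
My plan is to reduce this to results already established for $\mathcal{U}(\mathfrak{L}(\vec{a}))$. First I would identify $\vecdim(\mathfrak{L}(\vec{a})) = \phi_{a, p}(\vec{a})$. By Proposition~\ref{prop:LieToFree}, $\mathcal{U}(\mathfrak{L}(\vec{a}))$ is an \FGCCHA generated freely by the primitive set $X_{\vec{a}}$, which has graded cardinality $\vec{a}$. The standard fact $\mathcal{P}(\mathcal{U}(L)) = L$ (cited in Section~\ref{sec:HopfAlgebra}) then identifies the Lie algebra of primitives of $\mathcal{U}(\mathfrak{L}(\vec{a}))$ with $\mathfrak{L}(\vec{a})$ itself. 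Applying Proposition~\ref{prop:SequencesAsHopfStructures} to the triple $(\vecdim(\mathcal{U}(\mathfrak{L}(\vec{a}))), \vec{a}, \vecdim(\mathfrak{L}(\vec{a})))$ yields $\vecdim(\mathfrak{L}(\vec{a})) = \phi_{a, p}(\vec{a})$.

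Given this, the two forms of the lemma are equivalent via the short exact sequence
\[
0 \to [\mathfrak{L}(\vec{a}), \mathfrak{L}(\vec{a})] \to \mathfrak{L}(\vec{a}) \to \mathfrak{L}(\vec{a})\big/[\mathfrak{L}(\vec{a}), \mathfrak{L}(\vec{a})] \to 0,
\]
so it suffices to show $\vecdim\bigl(\mathfrak{L}(\vec{a})\big/[\mathfrak{L}(\vec{a}), \mathfrak{L}(\vec{a})]\bigr) = \vec{a}$. To see this, set $H = \mathcal{U}(\mathfrak{L}(\vec{a}))$ and apply Proposition~\ref{prop:indecomposable generators} with $\vec{A} = X_{\vec{a}}$. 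The hypothesis $H_{+} = \CC X_{\vec{a}} \oplus H^{2}_{+}$ holds because $X_{\vec{a}}$ freely generates $H$ as an algebra, so each degree $n$ component of $H_{+}/H^{2}_{+}$ is precisely the span of the degree $n$ generators. The Hopf algebra conclusion then gives
\[
\mathfrak{L}(\vec{a}) = \mathcal{P}(H) = \CC X_{\vec{a}} \oplus [\mathcal{P}(H), \mathcal{P}(H)] = \CC X_{\vec{a}} \oplus [\mathfrak{L}(\vec{a}), \mathfrak{L}(\vec{a})].
\]
Passing to the quotient, $\mathfrak{L}(\vec{a})\big/[\mathfrak{L}(\vec{a}), \mathfrak{L}(\vec{a})] \cong \CC X_{\vec{a}}$, which has graded dimension $\vec{a}$.

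There is no substantial obstacle here: the entire content is assembled by invoking the already-stated results in the correct order. The only subtlety to keep track of is the identification of the primitives of $\mathcal{U}(\mathfrak{L}(\vec{a}))$ with $\mathfrak{L}(\vec{a})$, so that the abstract decomposition from Proposition~\ref{prop:indecomposable generators} can be transported back to a statement purely inside the free Lie algebra.
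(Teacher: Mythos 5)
Your proof is correct and follows essentially the same route as the paper: the paper's (one-line) proof also invokes Proposition~\ref{prop:indecomposable generators} to obtain $\mathfrak{L}(\vec{a}) = \CC X_{\vec{a}} \oplus [\mathfrak{L}(\vec{a}), \mathfrak{L}(\vec{a})]$ and then takes graded dimensions. You have simply made explicit the supporting steps the paper leaves implicit, namely the identification $\mathcal{P}(\mathcal{U}(\mathfrak{L}(\vec{a}))) = \mathfrak{L}(\vec{a})$ and the resulting equality $\vecdim(\mathfrak{L}(\vec{a})) = \phi_{a,p}(\vec{a})$ via Proposition~\ref{prop:SequencesAsHopfStructures}.
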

\begin{proof}
Using Proposition~\ref{prop:indecomposable generators}, $ \mathfrak{L}(\vec{a}) = \CC X_{\vec{a}} \oplus [\mathfrak{L}(\vec{a}), \mathfrak{L}(\vec{a})]$.  Taking the dimension sequence of each space, we obtain an equivalent equation.
\end{proof}

\begin{lemma}[Shirshov--Witt Theorem~\cite{S09, Witt56}]
\label{lem:ShiWitt}
Every Lie subalgebra of a free Lie algebra is free.
\end{lemma}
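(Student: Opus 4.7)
The plan is to reduce Shirshov--Witt to a structural claim about sub-Hopf algebras of tensor algebras, which fits the Hopf-algebraic viewpoint developed in this paper. Let $L = \mathfrak{L}(X)$ be a free Lie algebra as in Definition~\ref{defn:freelie}, let $M \subseteq L$ be a graded Lie subalgebra, and embed $L$ inside $\mathcal{U}(L) = \mathsf{T}(\CC X)$ so that $L = \mathcal{P}(\mathsf{T}(\CC X))$. The subalgebra generated by $M$ inside $\mathsf{T}(\CC X)$ is isomorphic via PBW to the abstract universal enveloping algebra $\mathcal{U}(M)$, and because $M$ consists of primitive elements, $\mathcal{U}(M)$ is a graded, connected, cocommutative sub-Hopf algebra of $\mathsf{T}(\CC X)$ with $\mathcal{P}(\mathcal{U}(M)) = M$.

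The freeness of $M$ as a Lie algebra is then equivalent to the freeness of $\mathcal{U}(M)$ as an associative algebra. Specifically, if $\mathcal{U}(M)$ is free on some graded set, then applying the Eulerian idempotent from Equation~\eqref{eq:eulerianidempotent} and invoking Lemma~\ref{lemma:primitive generators} replaces that set by a primitive free generating set $Y \subseteq M$. Combining the Cartier--Milnor--Moore theorem (as used in the proof of Proposition~\ref{prop:SequencesAsHopfStructures}) with Proposition~\ref{prop:LieToFree} then identifies $M = \mathcal{P}(\mathcal{U}(M))$ with the free Lie algebra $\mathfrak{L}(Y)$, completing the argument.

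The main obstacle is therefore to show that every graded, connected, cocommutative sub-Hopf algebra of a tensor algebra is free as an algebra. I would attack this combinatorially using Lyndon words: fix a linear order on $X$, extend lexicographically to words, and recall that the bracketed Lyndon words on $X$ form a homogeneous basis of $L$ whose increasing products form a PBW basis of $\mathsf{T}(\CC X)$. To each nonzero homogeneous $b \in \mathcal{U}(M)$, associate its leading PBW monomial, and use a triangular-elimination argument --- essentially Shirshov's original technique --- to select from $\mathcal{U}(M)$ a homogeneous set whose leading monomials are products of distinct Lyndon words that freely generate the leading-term submonoid of $\mathcal{U}(M)$. A dimension check using Proposition~\ref{prop:combinatorialinterpretation} together with Proposition~\ref{prop:sequences} then confirms that this set freely generates $\mathcal{U}(M)$ as an algebra, closing the proof.
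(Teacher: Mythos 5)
The paper does not prove this lemma at all: it is imported as a black box from Shirshov and Witt, so there is no internal argument to compare yours against. Judged on its own terms, your setup is sound but the proof has a genuine gap exactly where the mathematical content lives. The dictionary in your first two paragraphs is correct over $\CC$ for a \emph{graded} Lie subalgebra $M$: the subalgebra of $\mathsf{T}(\CC X)$ generated by $M$ is $\mathcal{U}(M)$, it is a graded connected cocommutative sub-Hopf algebra with $\mathcal{P}(\mathcal{U}(M)) = M$, and by Cartier--Milnor--Moore together with Lemma~\ref{lemma:primitive generators} and Proposition~\ref{prop:LieToFree}, $M$ is free as a Lie algebra if and only if $\mathcal{U}(M)$ is free as an algebra. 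But this equivalence means your ``main obstacle'' is not a simplification of the theorem --- it \emph{is} the theorem, restated. (Your reduction also silently restricts to graded Lie subalgebras, whereas the lemma as stated covers arbitrary ones; only the graded case is needed in Section~\ref{sec:subalgebras}, but the discrepancy should at least be acknowledged.)

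The final paragraph, where the proof would actually have to happen, is only a gesture. You propose to pick elements of $\mathcal{U}(M)$ whose leading PBW monomials ``freely generate the leading-term submonoid,'' but subalgebras of free associative algebras are not free in general: already $\CC[x^{2}, x^{3}] \subseteq \CC[x] = \mathsf{T}(\CC x)$ fails, and there the leading terms generate the non-free submonoid $\{1, x^{2}, x^{3}, x^{4}, \ldots\}$ of the free monoid on $x$. So any triangular-elimination argument must invoke the hypothesis that $\mathcal{U}(M)$ is a sub-\emph{Hopf} algebra --- equivalently, that it is generated by the primitives $M$ --- in an essential way, and your sketch never identifies how. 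Likewise the concluding ``dimension check'' via Propositions~\ref{prop:combinatorialinterpretation} and~\ref{prop:sequences} presupposes that $\vecdim(\mathcal{U}(M)) = \phi_{a, h}(\veccard(Y))$ for your candidate set $Y$, which is precisely what a freeness proof must establish rather than assume. To close the argument you would need to run Shirshov or Lazard elimination at the level of the Lie algebra $M$ itself, or simply cite the theorem as the paper does.
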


\begin{proof}[Proof of Theorem~\ref{thm:subclassification2}]
We first show the map $K \mapsto \phi_{h, a} \big( \vecdim(K)\big)$ is injective on isomorphism classes.  Suppose that $J$ and $K$ are Hopf subalgebras of $H$ with the property that $\phi_{h, a} \big(\vecdim(J)) = \phi_{h, a} \big(\vecdim(K))$.  
Since $\phi_{h, a}$ is invertible by Proposition~\ref{prop:sequences} $\vecdim(J) = \vecdim(K)$.  We therefore conclude by Theorem~\ref{thm:AT} that $J \cong K$.

Now suppose that $\vec{b} \in \NN^{\ZZ_{+}}$ has the property that $\vec{q} = \phi_{a, p}(\vec{b}) \le \vec{p}$.  
We will construct a Hopf subalgebra $K$ of $H$ which is isomorphic to $\mathcal{U}(\mathfrak{L}(\vec{b}))$; by Proposition~\ref{prop:sequences} this will show that the  map $K \mapsto \phi_{h, a} \big( \vecdim(K)\big)$ is surjective, completing the proof.

There is some subtlety to defining $K$.  Since we have not assumed that $a_{n} \ge b_{n}$, we cannot take $K$ to be generated by some subset of the free generators of $H$, and may have to choose elements of $H^{2}_{+}$ as generators for $K$.  
These choices cannot always be expressed uniformly, so we resort to an inductive argument to demonstrate their existence.  
Specifically, we construct a tower of Lie subalgebras 
\[
0 = L^{(1)} \subseteq L^{(2)} \subseteq L^{(3)} \subseteq \cdots \subseteq \mathcal{P}(H)
\]
such that 
\[
L^{(n)} \cong \mathfrak{L}(b_{1}, \ldots, b_{n-1}, 0, 0, \ldots).
\]
Taking $L^{(\infty)}$ to be the union of the $L^{(n)}$, we obtain $K$ as $\mathcal{U}(L^{(\infty)}) \subseteq \mathcal{U}(\mathcal{P}(H)) = H$.

We proceed by induction with base case $L^{(1)} = 0$.  
We then assume inductively that $L^{(n)}$ has been constructed.  
By Lemma~\ref{lem:deriveddimension} and our inductive hypothesis,
\[
\vecdim([L^{(n)}, L^{(n)}])
 = 
\phi_{a, p}(b_{1}, \ldots, b_{n-1}, 0, 0, \ldots) - (b_{1}, \ldots, b_{n-1}, 0, 0, \ldots).
\]
Thus $\vecdim([L^{(n)}, L^{(n)}])_{n}$ is equal to the $n$th term of $\phi_{a, p}(b_{1}, \ldots, b_{n-1}, 0, 0, \ldots)$, which is also equal to $\vecdim(L^{(n)})_{n}$.  
We can further deduce, after a careful examination of the definition of $\phi_{a, p}$, that
\begin{equation}
\label{eq:inductivedimensionequality}
\phi_{a, p}(b_{1}, \ldots, b_{n-1}, 0, 0, \ldots)_{n}
=
\phi_{a, p}(\vec{b})_{n} - b_{n}
=
q_{n} - b_{n}.
\end{equation}
Therefore,
\[
\vecdim\left( \mathcal{P}(H) \big/ [L^{(n)}, L^{(n)}] \right)_{n}
= p_{n} - (q_{n} - b_{n})
= b_{n} + (p_{n} - q_{n})
\ge b_{n}.
\]

The preceding inequality shows that we can choose $b_{n}$ linearly independent elements $\{x^{(n)}_{1}, \ldots, x^{(n)}_{b_{n}}\}$ of $\mathcal{P}(H)_{n}$ which remain linearly independent modulo $[L^{(n)}, L^{(n)}]$ and define $L^{(n+1)}$ to be the Lie algebra generated by  elements of $L^{(n)}$ as well as $x^{(n)}_{1}, \ldots, x^{(n)}_{b_{n}}$.

Since $L^{(n+1)}$ is a Lie subalgebra of a free Lie algebra, namely $\mathcal{P}(H)$, Lemma~\ref{lem:ShiWitt} states that $L^{(n+1)}$ is also free.  
As $L^{(n+1)}$ is generated by any generating set of $L^{(n)}$ and $b_{n}$-many homogeneous elements of degree $n$, we deduce that
\[
L^{(n+1)} \cong \mathfrak{L}(b_{1}, \ldots, b_{n-1}, b_{n}', 0, \ldots )
\qquad\text{for some $b_{n}' \le b_{n}$}.
\]
The final step of the proof is to show that $b_{n}' = b_{n}$.  Using Lemma~\ref{lem:deriveddimension} and the definition of $L^{(n+1)}$,
\begin{align*}
b_{n}' &= \dim\left( L^{(n+1)}_{n} \big/ [L^{(n+1)}, L^{(n+1)}]_{n}\right)
\end{align*}
Then $[L^{(n+1)}, L^{(n+1)}]_{n} = [L^{(n)}, L^{(n)}]_{n}$ as $L^{(n)}$ and $L^{(n+1)}$ contain the same homogeneous elements of degree $n - 1$ and less.  Therefore,
\begin{align*}
 \dim\!\left( L^{(n+1)}_{n} \big/ [L^{(n+1)}, L^{(n+1)}]_{n}\right) 
&= \dim\!\left(  \CC\operatorname{-span}\{x^{(n)}_{1}, \ldots, x^{(n)}_{b_{n}}\} \oplus [L^{(n)}, L^{(n)}]_{n} \big/ [L^{(n)}, L^{(n)}]_{n} \right) \\
&= b_{n}.
\end{align*}
\end{proof}

\bibliographystyle{plain}
\bibliography{bibliography}{}

\end{document}